\newtheorem{theorem}{Theorem}
\newtheorem{example}{Example}
\newtheorem{remark}{Remark}
\newtheorem{lemma}{Lemma}
\newtheorem{proposition}{Proposition}
\newtheorem{corollary}{Corollary}
\definecolor{strcolor}{rgb}{0.6, 0.2, 0.6}
\definecolor{commentcolor}{rgb}{0.3125, 0.5, 0.3125}
\definecolor{keycol}{rgb}{0, 0, 1}
\newcommand{\aprioriassignments}{\Psi}
\newcommand{\survival}{\ensuremath{s}}
\newcommand{\stations}{\mathcal{S}}
\newcommand{\station}{s}
\newcommand{\Problem}{\ensuremath{\textsc{CRP}}\xspace}  
\newcommand{\ProblemNoFairness}{\ensuremath{\overline{\textsc{CRP}}}\xspace}
\newcommand{\arrivalrate}{\lambda}
\newcommand{\network}{\mathcal{N}}
\newcommand{\coaches}{\mathcal{C}}
\newcommand{\coach}{c}
\newcommand{\reservedcapacity}{\omega}
\newcommand{\veccapacity}{\boldsymbol{\kappa}}
\newcommand{\seatdemand}{\boldsymbol{q}}
\newcommand{\requests}{\ensuremath{\mathcal{R}}}
\newcommand{\request}{r}
\newcommand{\price}{p}
\newcommand{\requesttypes}{\mathcal{T}}
\newcommand{\requesttype}{t}
\newcommand{\notation}[2]{\textcolor{blue}{\ensuremath{#1\coloneqq#2}}}
\newcommand{\emphmath}[1]{\textcolor{red}{\ensuremath{#1}}}
\newcommand{\arrivalorder}{\ensuremath{\pi}}
\newcommand{\setLegs}{\mathcal{L}}
\newcommand{\setCoaches}{\mathcal{C}}
\newcommand{\origStation}{{orig}}
\newcommand{\destStation}{{dest}}
\newcommand{\leg}{\mathsf{l}}
\newcommand{\numPass}{n}
\newcommand{\profit}{\ensuremath{P}}
\newcommand{\assignments}{\ensuremath{\aleph}}
\newcommand{\numrequests}{n}
\newcommand{\timetogo}{\ensuremath{i}}
\newcommand{\timehorizon}{\ensuremath{\mathcal{I}}}
\newcommand{\vecx}{\ensuremath{\boldsymbol{x}}}
\begin{document}





\title{Coach Reservation for Groups Requests}

\author{
    Carlos Cardonha$^{1}$,
    Arvind U. Raghunathan$^{2}$ 
 \and
	\small$^{1}$ Department of Operations and Information Management, School of Business, University of Connecticut\\ Storrs, CT 06028, United States.\and
	\small$^{2}$Mitsibushi Electric Research Laboratories\\ Cambridge, MA 02139, United States.
}



\maketitle

\begin{abstract}%
Passenger transportation is a core aspect of a railway company’s business, with ticket sales playing a central role in generating revenue. Profitable operations in this context rely heavily on the effectiveness of reject-or-assign policies for coach reservations. As in traditional revenue management, uncertainty in demand presents a significant challenge, particularly when seat availability is limited and passengers have varying itineraries. We extend traditional models from the literature by addressing both offline and online versions of the coach reservation problem for group requests, where two or more passengers must be seated in the same coach. For the offline case, in which all requests are known in advance, we propose an exact mathematical programming formulation that incorporates a first-come, first-served fairness condition, ensuring compliance with transportation regulations. We also propose algorithms for online models of the problem, in which requests are only revealed upon arrival, and the reject-or-assign decisions must be made in real-time. Our analysis for one of these models overcomes known barriers in the packing literature, yielding strong competitive ratio guarantees when group sizes are relatively small compared to coach capacity—a common scenario in practice. Using data from Shinkansen Tokyo-Shin-Osaka line, our numerical experiments demonstrate the practical effectiveness of the proposed policies. Our work provides compelling evidence supporting the adoption of fairness constraints, as revenue losses are minimal, and simple algorithms are sufficient for real-time decision-making. Moreover, our findings provide a strong support for the adoption of fairness in the railway industry and highlight the financial viability of a regulatory framework that allows railway companies to delay coach assignments if they adhere to stricter rules regarding request rejections.
\end{abstract}%



%


\section{Introduction}

This article investigates the coach reservation problem (\Problem), which encompasses reject-or-assign strategies of group requests to coaches in a single train ride with finite and non-elastic capacity, i.e., overbooking is not allowed, and the seating capacity imposes a constraint that cannot be violated. Similar models studied in the transportation literature focus on \textit{individual requests}, where reject-or-assign decisions are independent across passengers. However, a significant fraction of the passengers using transportation services travel in \textit{groups}. According to~\citet{deplano2019offline}, 29\% of passengers traveled in groups in United Kingdom stations. Moreover, a recent study has suggested that group traveling is becoming increasingly popular among younger generations in the Americas and in Europe; namely,  whereas 71\% of the people born between 1964 and 1964 (boomers) travel alone, 58\% of those born between 1981 to 1996 (millennials) travel in groups (\cite{CWTResearch}).  

From an operational standpoint, group requests differ from individual ones because all group members must be assigned to the same coach for the entire itinerary. Railway companies have strong incentives to accommodate group requests, as doing so improves customer experience and boosts both revenue and capacity utilization. However, the reject-or-assign problem becomes more challenging, as the opportunity costs of accepting ``small'' requests (few passengers and short itineraries) and, with that, losing capacity to assign ``large'' ones arriving later is magnified in this setting (we illustrate this case in Example~\ref{ex: fairness}).  

Our study is motivated by the coach reservation problem on Shinkansen trains (popularly known as ``bullet trains'') operated by the Japan Railways Group (JR). In our study, we use the itinerary of the Tokaido Shinkansen line, which consists of five stations (four legs): Tokyo, Shin-Yokohama, Nagoya, Kyoto, and Shin-Osaka. The line has more stops, but we focus on these five because the other stops are not serviced by all Shinkansen trains. Figure~\ref{fig:shinkansen_map} presents a graphical depiction of this line. 
\begin{figure}[h]
    \centering
    \includegraphics[width=0.8\textwidth]{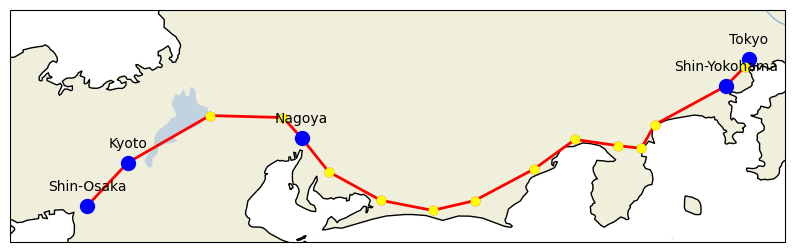}
    \caption{Map of the Tokyo-Shinosaka line (generated with Cartopy (\cite{Cartopy})). }
    \label{fig:shinkansen_map}
\end{figure}
Our main assumptions were extracted from the operational practices currently adopted by JR. 
First, we assume that all fares are fixed, 
as JR does not adopt dynamic pricing for bullet trains; some companies selling Shinkansen tickets offer early-bird discounts for a limited number of seats, but apart from that, fares are fixed. Moreover, our theoretical results and analysis are tailored and optimized for parameter values observed in real-world settings. For reference, a bullet train typically comprises 16 coaches, each with approximately 100 seats on average. We also assume that the largest group consists of six passengers, as requests with larger groups are rare and handled manually by railway companies. This aspect is important in our work because we explore the relationship between the size of the largest group and the seating capacity per coach to circumvent well-known technical barriers and improve theoretical results presented in the packing literature.

We investigate different versions of~\Problem, defined according to the arrival model of the requests  and fairness considerations. Concerning the arrival model, real-world applications of~\Problem are typically online, in that the type of an incoming request (defined by its group size and itinerary) is only revealed upon its arrival, and reject-or-assign  decisions are irrevocable. We study two online models for~\Problem, defined based on the absence or not of information about  the arrival rates of the request types. From our discussions with practitioners, demand in the Shinkansen system is robust and stable out of peak seasons (between Christmas and New Year's Day, the cherry blossom season in April, and the fall foliage season in November). This information is relevant to our work because we explore the stability and predictability of demand in our online models. Moreover, \Problem is structurally similar to the vector multiple knapsack problem (VKMP), a classic combinatorial optimization problem, so some of our results are of independent interest. We also study the offline~\Problem, in which we assume that all requests are known;  efficient exact algorithms for the offline~\Problem are essential building blocks for online algorithms studied in this paper.

We also investigate variants of~\Problem incorporating fairness conditions. These scenarios deserve special attention, as empirical evidence has shown the challenges of adapting fully utilitarian solution techniques to scenarios where fairness constraints must be observed  (see, e.g.,~\citet{xie2013dynamic} for an example involving an implementation of the formulation presented by~\citet{ciancimino1999mathematical} at the Shenyang Railway Bureau). We study the \textit{first-come, first-serviced} (FCFS) definition of fairness, in which an incoming request \textit{must} be serviced if the train has enough residual seating capacity to accommodate it (i.e., the railway company cannot unilaterally reject an ``inconvenient'' request). FCFS is commonly adopted in practice (JR is one example), and its adoption imposes significant challenges to assignment policies. We also study~\Problem under a \textit{strict first-come, first-serviced} (SFCFS) notion of fairness, where assignment decisions can be modified, but an incoming request must be accepted if it can be part of an assignment plan that includes all previously accepted requests. SFCFS provides higher flexibility to the railway company (in that assignment plans may be modified over time), and passengers are more likely to have their requests assigned, so we study the technical and financial viability of adopting these fairness conditions in practice.

\subsection{Technical Background and Challenges}

\Problem can be cast as a multiple multidimensional knapsack problem, where each knapsack is a coach, and the dimensions represent legs. In particular, \Problem is structurally similar to VKMP because all coaches are identical, i.e., the resource consumption and collected revenue of a request depends only on its type (group size and itinerary) and not on the coach it is assigned to  (see~\cite{naori2019online}). 
\citet{chekuri2004multidimensional} show that the 
offline VKMP
does not admit a polynomial-time~$(d^{1-\epsilon})$-approximation algorithm for any~$\epsilon > 0$, where~$d$ is the number of legs. VKMP is also computationally challenging, so previous work has focused mainly on heuristic algorithms (\cite{ahuja2005very,ang2007model,camati2014solving}).

The results and performance guarantees for online problems depend on the model's assumptions. Worst-case competitive analysis, the most popular technique for analyzing online algorithms (\cite{borodin2005online}), assumes that an adversarial environment can choose the parameters of the requests and their arrival order. Under these conditions, packing problems typically do not admit algorithms with constant-factor performance guarantees (\cite{marchetti1995stochastic}). An alternative setting is the random order model, in which an adversarial environment can still choose the parameters of the requests, but the arrival order is randomized, and the number of incoming requests is known a priori; an  example of an online problem in the random order model is the secretary problem (\cite{ferguson1989solved}). The state-of-the-art result for the online VKMP in the random order model is the $(4d + 2)$-competitive algorithm by~\citet{naori2019online}, which is also shown to be asymptotically optimal. Therefore, without additional assumptions, one should not expect to eliminate the dependence on~$d$ (number of legs) from the competitive factor of any algorithm for the online~\Problem unless P = NP. 

The barrier imposed by the number of legs on offline and online multiple multi-dimensional knapsack problems reflects a pathological case with a simple interpretation in the case of~\Problem. Namely, even if a train ride passes through numerous legs, the arrival of many requests with itineraries passing through the same leg creates a bottleneck that hinders the assignment of requests in the future.  More precisely, once a single leg is sold out, all future incoming requests with itineraries passing through that leg must be rejected. This is particularly problematic for the railway company if the accepted requests cover only a few legs. These losses are particularly hard to mitigate in online settings because demand is uncertain and the algorithms do not know whether more profitable requests will arrive in the future.

\subsection{Contributions of this work}

We propose mixed-integer linear programming formulations to solve the offline versions of~\Problem. In particular, we introduce families of inequalities to enforce the FCFS fairness conditions and present a branch-and-cut algorithm to solve the problem. We also present a formulation for offline and online versions of~\Problem
that explores knowledge about arrival rates to pre-assign requests based on their arrival probabilities.

We also 
propose algorithms for two online models of~\Problem-FCFS and study their theoretical properties. The first consists of the online \Problem-FCFS in the random order model, in which we know the number of incoming requests and assume that the arrival order is randomized. We propose an algorithm for this model and present a refined analysis to derive guarantees for the relevant scenarios of~\Problem. These  results are of independent interest as they extend to the online VKMP. We also present an asymptotically optimal policy for the online \Problem when the algorithm has information about the arrival rates of each request type. 

We conclude the paper with an extensive numerical study of the algorithms proposed in the paper using data extracted from the Shinkansen Tokyo-Shin-Osaka line. In particular, our results suggest that SFCFS fairness conditions are viable both from the a technical and an economical standpoint, i.e., the problem can be solved efficiently and the impact of increased fairness conditions on revenues is negligible.

\section{Literature Review}

The revenue management literature addressed numerous challenges in railway operations, such as pricing and capacity allocation (\cite{manchiraju2022dynamic}), seat assignment (\cite{hetrakul2014latent})
crew management (\cite{caprara1997algorithms}), line planning (\cite{borndorfer2007column}), auction of railway resources (\cite{borndorfer2006auctioning}),
timetabling, rolling stock circulation (\cite{alfieri2006efficient}). For an overview of the operations research literature on railway operations, we refer to~\citet{huisman2005operations}.

\citet{boyar1999seat} is one of the early articles addressing the seat assignment problem with fairness constraints. The paper focuses on the online version of the problem and shows that any fair deterministic algorithm has competitive ratio~$\Theta(1/d)$, where~$d$ is the number of legs (see also~\cite{miyazaki2010improving}); this result is aligned with the findings of~\citet{naori2019online} for the online VMKP. \citet{boyar1999seat} also introduce the concept of~$c$-accommodating algorithms for the seat reservation problem, which indicates the competitive ratio of an online algorithm under the assumption that an optimal solution services all incoming requests. For the special case where ticket prices are not affected by the length of the itinerary, any algorithm is at least~$\frac{1}{2}$-accommodating (i.e., any fair online algorithm can service at least half of the requests).  Regarding upper bounds, \citet{bach2003tight} show that all deterministic algorithms are~$\frac{1}{2}$-accommodating and that the asymptotic upper bound for randomized algorithms is~$\frac{7}{9}$. \citet{boyar2008relative} later compare online algorithms for the seat reservation problem using the relative worst order ratio, which compares the worst-case performance of two algorithms over a restricted set of instances. 

The literature also shows that flexibility in seat assignments provides some advantages. For example, \citet{boyar2004seat} investigate a version of the problem where the reserved seats may be changed (i.e., similar to \Problem with SFCFS fairness conditions), and they show that any online algorithm that changes two or more seats is~$\frac{3}{4}$-accommodating. In a related line of work, \citet{frederiksen2003online} propose a data structure that solves an online version of the problem where the decisions about accepting or rejecting the requests are in real-time, but the seat assignments are only made shortly before the trip starts. 

Another relevant reference is \citet{clausen2010off}, which studies the offline group seat reservation problem. In addition to the capacity planning version of the problem, where the goal is to minimize the number of wagons needed to service all requests, the authors present upper bounds and an exact branch-and-bound algorithm for the case where the goal is to maximize utilization. \citet{deplano2019offline} propose a GRASP-based algorithm for an extension where the seats may have heterogeneous (but fixed) costs.

\Problem has connections with other classical combinatorial optimization problems. For example, \Problem is related to packing rectangles into rectangles, with the dimensions representing seats and legs. Similar problems are observed in scheduling for cluster computing and dynamic storage systems (\cite{leung1990packing,kierstead1991polynomial}). Seat reservation problems involving groups have also been investigated in other settings, such as even seating  (\cite{bergman2019exact, olivier2021quadratic, mehrani2022models}).

\citet{manchiraju2022dynamic}
investigate policies for joint pricing and capacity planning in systems with discrete capacity unities and with congestion management when these capacities are ``soft'' constraints. In contrast with~\citet{manchiraju2022dynamic}, we do not consider capacity planning or pricing, but we allow group requests and forbid overbooking. 
Our model is similar to the one studied by~\citet{bai2023fluid}, but \Problem has a multi-knapsack aspect that is absent from~\citet{bai2023fluid}. Therefore, in addition to accepting requests, we must also choose which coach to assign to a request. From the technical standpoint, we focus on the competitive analysis of our online algorithms instead of regret, which is frequently used in similar problems incorporating dynamic pricing (see, e.g.,~\citet{jasin2014reoptimization,vera2021online,wang2022constant}).

\section{Problem Description and Notation}

Let~$\notation{\network} {(\stations,\setLegs)}$ denote the graphical representation of a railway transportation network. Each vertex in~$\stations$ is associated with a station, and each edge in~$\setLegs$ represents a leg, i.e., a direct (non-stop) connection between a pair of stations. We assume, for simplicity of exposition and without loss of generality, that~$\network$ is a path graph so that there is an order of the stations in~$\stations$ induced by their relative positions; we use~$i \in [|\stations|]$ to denote the~$i$-th station in~$\stations$, and~$\station < \station'$ indicates that stations~$\station$ is located before~$\station'$. The set of legs~$\setLegs$ is given by all pairs of consecutive stations in the sequence, i.e., $\notation{\setLegs}{\{ (\station,\station+1) \,|\, \station \in [|\stations|-1]\}}$. We use the itinerary of the Tokyo-Shinosaka line in our study, which consists of five stations and four legs (see Figure~\ref{fig:shinkansen_map}).

In each \textit{full trip}, a train goes from the first to the last station, thus traversing each leg and itinerary exactly once. \Problem models the problem from the perspective of a railway company that wishes to determine the reject-or-assign policy for ticket requests associated with a single full trip, i.e., there is just one train and one choice of departure time for each itinerary. Therefore, our model does not consider cannibalism effects across different offerings (e.g., other similar full trips with different departure times). We define a discrete planning (or selling) horizon~$\emphmath{\timehorizon}$ to represent  the period when the tickets are available for purchase. In practice, the selling horizon for Shinkansen trains typically consists of 30 days.

A train comprises a set~$\emphmath{\setCoaches}$ of coaches (or wagons), each with a \textit{seating capacity}~$\emphmath{\reservedcapacity} \in \mathbb{N}$; an average Shinkansen train has 16 coaches, each with approximately 100 seats. Seats are assigned to requests over time, so we refer to the \textit{residual (seating) capacity}  when alluding to the number of seats available at a certain moment in the selling horizon. Our model does not consider cancellations, no-shows, or overbooking, and the seating capacity~$\reservedcapacity$ is a hard constraint (i.e., all passengers must have a seat). 

We categorize requests into \textit{types} based on \textit{group size} and \textit{itinerary}. Namely, a request of type~$\emphmath{\requesttype}$ is associated with a group of~$\emphmath{\numPass(\requesttype)}$ passengers traveling together from station~$\emphmath{\origStation(\requesttype)} \in \stations$ to  station~$\emphmath{\destStation(\requesttype)}$; we use~$\emphmath{\requesttypes}$ to denote the set of request types. The set of legs traversed in the itinerary of request type~$\requesttype$ is denoted by~$\emphmath{\setLegs(\requesttype)}$,
and~\notation
{\delta}
{\max\limits_{\requesttype \in\requesttypes}\frac{\numPass(\requesttype)}{\reservedcapacity}} is the maximum fraction of a coach's seating capacity that a group may occupy. In practice, $\numPass(\requesttype) \leq 6$, so~$\delta \approx 0.06\%$; the fact that~$\delta$ is small (i.e., the maximum group size is significantly smaller than the seating capacity) plays an important role in our analysis.

We use~$\emphmath{\requests}$ to denote the set of requests and~$\notation{\request}{ (\requesttype,j)}$ to indicate that~$\request$ is the~$j$-th incoming request of type~$\requesttype$.  Given two requests~$\request$ and $\request'$, the relation~$\request < \request'$ indicates that~$\request$ arrives before~$\request'$. For~$\timetogo$ in~$\mathbb{N}$, \emphmath{\requests_\timetogo} contains the first~$\timetogo$ incoming requests of~$\requests$, and \emphmath{\request_{\timetogo}} denotes the~$\timetogo$-th incoming request. The revenue collected from each assigned request of type~$\requesttype$ is denoted by~$\emphmath{\price(\requesttype)}$; salvage values are zero, i.e., the railway company does not make (or save) money from unsold seats. We use~$\emphmath{\requesttype(\request)}$ to denote the type of request~$\request$, and to simplify the notation, we sporadically interchange~$\requesttype(\request)$ with~$\request$ (e.g., $\numPass(\request)$  represents~$\numPass(\requesttype(\request))$.

We use~\notation{\assignments}{ \requests \times \coaches} (or, equivalently, $\assignments \coloneqq \requesttypes \times \mathbb{N} \times \coaches$) to denote the set of all possible assignments of requests to coaches; namely, a pair~$(\request,\coach)$ (triple~$(\requesttype,k,\coach)$) in $\assignments$ indicates that request~$\request$ (the~$k^{th}$ incoming request of type~$\requesttype$)   is assigned to coach~$\coach$. An \textit{assignment plan}~$\notation{\mathcal{A}}{ \{(\request,\coach) \in \requests(\mathcal{A}) \times \coaches\}}$ is a subset of~$\assignments$ representing the assignments of each request~$\request$ in~$\requests(\mathcal{A})$ to exactly one coach.  We use~\emphmath{\mathcal{A}_\timetogo} to denote the restrictions of~$\mathcal{A}$ to the first~$\timetogo$ incoming requests. In a feasible assignment plan, the number of passengers assigned to a coach in any given leg must not exceed the seating capacity~$\reservedcapacity$. Additionally, requests cannot be fragmented or only partially satisfied; if a request~$\request$ is accepted, all~$\numPass(\request)$ passengers must be assigned to the same coach for the entire requested itinerary. The goal of the railway company is to identify a reject-or-assign policy that identifies a feasible assignment plan~$\mathcal{A}$ such that~\notation{\profit(\mathcal{A})}{\sum\limits_{\request \in \requests(\mathcal{A})}\price(\requesttype)} (or just~$\profit$ if~$\mathcal{A}$ is implicit) is maximum. We use the acronym~\Problem when referring to this baseline version of the problem.

\subsection{Fairness Constraints}\label{sec: fairness notation}

Some railway companies must adhere to \textit{fairness constraints} that forbid unilateral request rejections. We use~$\emphmath{\veccapacity_\timetogo} \in \mathbb{N}^{\setCoaches \times \setLegs}$ to denote the train's (residual) capacity upon the arrival of request~$\request_{\timetogo}$; in particular, $\veccapacity_1$ denotes the original seating capacity. We drop the time-step index from~$\emphmath{\veccapacity_\timetogo}$ and use~$\veccapacity$ instead whenever~$\timetogo$ is clear or irrelevant in the context. We use~$\emphmath{\seatdemand^{\coach,  \requesttype}} \in \mathbb{N}^{\setCoaches \times \setLegs}$ to represent the resource consumption resulting from the assignment of a request of type~$\requesttype$ to coach~$\coach$, i.e., $q^{\coach,  \requesttype}_{\coach,\leg} = \numPass(\requesttype)$ if~$\leg \in \setLegs(\requesttype)$ and $q^{\coach,  \requesttype}_{\coach',\leg'} = 0$ if~$\coach' \neq \coach$ or~$\leg' \notin \setLegs(\requesttype)$. Therefore, given a residual capacity matrix~$\veccapacity$, assigning a request of type~$\requesttype$ to coach~$\coach$ yields the residual capacity~$\veccapacity - \seatdemand^{\coach,\requesttype}$. We use~$\veccapacity - \seatdemand^{\coach,  \requesttype} \geq 0$ to indicate that the residual capacity of coach~$\coach$ for each leg in~$\setLegs(\requesttype)$ is not smaller than~$\numPass(\requesttype)$, i.e., $\veccapacity - \seatdemand^{\coach,  \requesttype} \geq 0$ indicates that a request of type~$\requesttype$ can be assigned to~$\coach$. We use \notation{\coaches_{\veccapacity,\requesttype}}{ \{ \coach \in \coaches: \veccapacity - \seatdemand_{\coach,  \requesttype} \geq 0 \} } to denote the set of coaches that can be assigned to a request of type~$\requesttype$ given residual capacity~$\veccapacity$.

\subsubsection{First-Come, First Serviced}

FCFS fairness conditions force the assignment of any incoming request~$\request_\timetogo$ in situations where the residual capacities are sufficiently large, i.e., if~$\coaches_{\veccapacity_\timetogo,\request_\timetogo} \neq \emptyset$. In particular, the reject-or-assign decisions of JR (the operator of the Shinkansen trains) are subject to FCFS constraints. We use the acronym~\emphmath{\Problem\text{-FCFS}} to denote~\Problem with FCFS constraints. From a technical standpoint, \Problem-FCFS is significantly more challenging than~\Problem, and the economic implications of imposing  can be significant, as exhibited in Example~\ref{ex: fairness}. 

\begin{example}[Price of fairness]\label{ex: fairness}
Figure~\ref{fig:fairness} depicts the residual capacity~$\request_\timetogo$ of a coach for each leg (e.g., there are 22 seats available in the leg connecting Nagoya to Shin-Yokohama) and the resource demands of requests~$\request_{\timetogo}$ and~$\request_{\timetogo+1}$. Request~$\request_\timetogo$ is associated with a person who wants to travel from Shin-Osaka to Kyoto; as there are six seats available in this leg upon the arrival of~$\request_\timetogo$, request~$\request_\timetogo$ must be assigned according to the FCFS constraint. As a result, the residual capacity~$\veccapacity_{\timetogo+1}$ in the first leg goes down to 5, so request~$\request_{\timetogo+1}$, associated with six passengers willing to travel through all legs, must be rejected.
\end{example}

\begin{figure}
    \centering
\begin{tikzpicture}[transform shape, scale=0.65, every node/.style={minimum size=1cm, inner sep=0pt}, every label/.style={font=\scriptsize},
box/.style={draw, rounded corners, inner sep=10pt, align=center}]

    \node[text width=3cm] at (-3,0) 
    {\scriptsize Residual  Capacity~$\veccapacity_\timetogo$};
    \draw[draw=black] (-1.5,-1.25) rectangle ++(13,2.5);
        \node [circle, draw, minimum size=2cm, inner sep=0pt] (A1) at (0,0) {\scriptsize Shin-Osaka};
        \node [circle, draw, minimum size=2cm, inner sep=0pt] (A2) at (2.5,0) {\scriptsize Kyoto};
        \node [circle, draw, minimum size=2cm, inner sep=0pt] (A3) at (5.0,0) {\scriptsize Nagoya};
        \node [circle, draw, minimum size=2cm, inner sep=0pt] (A4) at (7.5,0) {\scriptsize Shin-Yokohama};
        \node [circle, draw, minimum size=2cm, inner sep=0pt] (A5) at (10,0) {\scriptsize Tokyo};
    
        \draw (A1) -- node[above] {6} (A2);
        \draw (A2) -- node[above] {7} (A3);
        \draw (A3) -- node[above] {22} (A4);
        \draw (A4) -- node[above] {9} (A5);

    \node[text width=3cm] at (-3,-3) {\scriptsize Request~$\request_{\timetogo}$};
    \draw[draw=black] (-1.5,-4.25) rectangle ++(13,2.5);

    \node [circle, draw, minimum size=2cm, inner sep=0pt] (B1) at (0,-3) {\scriptsize Shin-Osaka};
    \node [circle, draw, minimum size=2cm, inner sep=0pt] (B2) at (2.5,-3) {\scriptsize Kyoto};
    \node [circle, draw, minimum size=2cm, inner sep=0pt] (B3) at (5,-3) {\scriptsize Nagoya};
    \node [circle, draw, minimum size=2cm, inner sep=0pt] (B4) at (7.5,-3) {\scriptsize Shin-Yokohama};
    \node [circle, draw, minimum size=2cm, inner sep=0pt] (B5) at (10,-3) {\scriptsize Tokyo};

    \draw (B1) -- node[above] {\textcolor{blue} 1} (B2);
    \draw (B2) -- node[above] {\textcolor{blue} 0} (B3);
    \draw (B3) -- node[above] {\textcolor{blue} 0} (B4);
    \draw (B4) -- node[above] {\textcolor{blue} 0} (B5);

    \node[text width=3cm] at (-3,-6) {\scriptsize Request~$\request_{\timetogo+1}$};
    \draw[draw=black] (-1.5,-7.25) rectangle ++(13,2.5);
    \node [circle, draw, minimum size=2cm, inner sep=0pt] (C1) at (0,-6) {\scriptsize Shin-Osaka};
    \node [circle, draw, minimum size=2cm, inner sep=0pt] (C2) at (2.5,-6) {\scriptsize Kyoto};
    \node [circle, draw, minimum size=2cm, inner sep=0pt] (C3) at (5,-6) {\scriptsize Nagoya};
    \node [circle, draw, minimum size=2cm, inner sep=0pt] (C4) at (7.5,-6) {\scriptsize Shin-Yokohama};
    \node [circle, draw, minimum size=2cm, inner sep=0pt] (C5) at (10,-6) {\scriptsize Tokyo};

    \draw (C1) -- node[above] {\textcolor{red} 6} (C2);
    \draw (C2) -- node[above] {\textcolor{blue} 6} (C3);
    \draw (C3) -- node[above] {\textcolor{blue} 6} (C4);
    \draw (C4) -- node[above] {\textcolor{blue} 6} (C5);

\end{tikzpicture}
    \caption{Example of adversarial setting for FCFS fairness. 
    }
    \label{fig:fairness}
\end{figure}
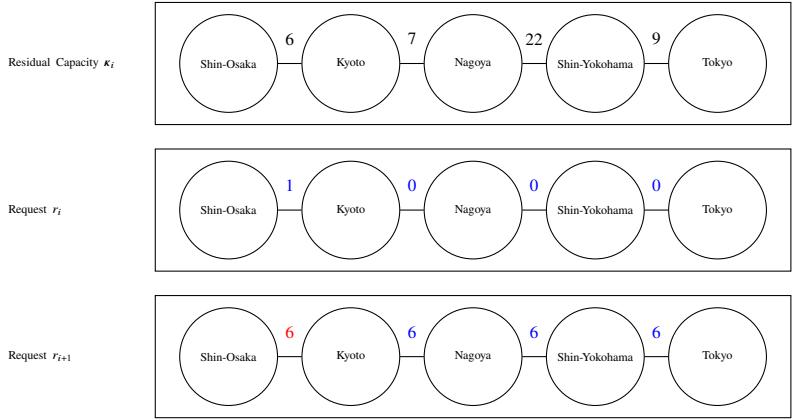

\subsubsection{Strict First-Come, First Serviced}

We also study~\emphmath{\Problem\text{-SFCFS}}, a version of~\Problem that adopts a stricter notion of fairness while providing more  flexibility for the assignment of requests.  In this setting, the railway company only needs to maintain the set~$\requests(\mathcal{A}_{\timetogo})$ of assigned requests, whereas the assignments of requests to coaches may change over time.
In exchange for the flexibility gained from delayed assignments, the railway company must accept an incoming request~$\request_\timetogo$ if there exists \textit{any} feasible assignment~$\mathcal{A}_{\timetogo+1}$ such that~$\requests(\mathcal{A}_{\timetogo}) = \requests(\mathcal{A}_{\timetogo}) \cup \{\request_{\timetogo}\}$.

\subsection{Online Extensions} \label{sec: online notation}

In online settings of~\Problem, the requests only become known upon arrival, and reject-or-assign decisions about a request~$\request_\timetogo$ are irrevocable and must be made before~$\request_{\timetogo+1}$ arrives. We assume that each time step in the planning horizon~$\timehorizon$ is associated with exactly one request, so~$\timetogo \in \timehorizon$ denotes a time step and and an arrival order. We study two online models for~\Problem, distinguished based on the information available to the railway company. 

\paragraph{Random Order Model}
In the random order model, we are given the number~\notation{\numrequests}{|\requests|} of arrivals. This assumption is important for the theoretical analysis and  mild for real-world scenarios, as demand for seats in Shinkansen trains is stable, and the railway company can work with adequate estimates of~$\numrequests$. For our analysis, we assume that the features of the requests (number of passengers, itinerary, and fare) can be adversarially  chosen, but the arrival order is uniformly sampled from all~$\numrequests!$ permutations of~$\requests$. 

This randomized setting provides a more realistic representation of worst-case scenarios, as fully adversarial environments (e.g., scenarios where an adversary controls the requests and their arrival order) are overly pessimistic and unlikely to be observed in practice. Moreover, the theoretical guarantees of algorithms in this model are typically stronger and better reflect the performance of high-quality policies in practice. The expected competitive ratio of an online algorithm in the random order model is defined as the ratio between the expected objective attained by the algorithm and the optimal objective value~\emphmath{\profit^*} of the offline version of the problem for the same instance.

\paragraph{Arrival Rate}
We also consider an online model where the railway company knows the arrival rate~\emphmath{\arrivalrate_{\requesttype}} $> 0$ of each request type~$\requesttype$. In this model, the number of arrivals is uncertain, so we use~$\emphmath{X}$ to denote the random variable representing the number of requests. We assume that~$Pr[X \geq 1] = 1$, i.e., at least one request arrives over the entire sale horizon almost surely (as the problem is trivial otherwise). We restrict our analysis to the arrivals happening within a time horizon~\emphmath{\timehorizon}, which 
implies a restriction on the number of requests we consider; in particular, we have~$Pr[X > |\timehorizon|] > 0$. We note that the choice of~$\timehorizon$ is relevant only for our analysis and not for our policies. We use \notation{\survival_{\timetogo}}{ Pr[X \geq \timetogo + 1 |X \geq \timetogo ] } to denote  the probability that at least~$\timetogo+1$ requests will arrive given that~$\timetogo$ requests have been observed. 

\newcommand{\probability}{\ensuremath{\rho}}

We assume the requests follow a Markovian arrival process, i.e., all requests arrive independently. Therefore, the arrival of requests can be interpreted as the combination of multiplied independent Poisson processes, so a request of type~$\requesttype$ arrives in any time step with probability~
${\arrivalrate_{\requesttype}}/{\sum\limits_{\requesttype \in \requesttypes}\arrivalrate_{\requesttype}}$.
For ease of exposition, we assume that the arrival rates are scaled to the time intervals defining the time steps so that~$\sum\limits_{\requesttype \in \requesttypes}\arrivalrate_{\requesttype} = 1.0$, i.e., $\arrivalrate_{\requesttype}$ represents the arrival rate and the arrival probability for requests of type~$\requesttype$. The Markovian assumption about arrival rates is not adequate in general, as it assumes limitations on the variability in demand that is not always observed in practice (\cite{talluri2006theory, aouad2022nonparametric,bai2023fluid}). Nevertheless, demand is stable in the Shinkansen system, so this assumption is adequate for our setting.

\section{Online Policies for \Problem}
\label{section:Online Policies for CRP}

This section studies~\Problem, the version of the problem where the railway company does not have to comply with fairness constraints, i.e., it can unilaterally reject requests. We present an exact mixed-integer programming (MIP) formulation of~\Problem for the offline case and study algorithms for two online versions.

\subsection{Mathematical Programming Formulation for the Offline~\ProblemNoFairness}

We present~\ref{model:offline} below, an exact MIP formulation that solves~\emphmath{\Problem}.
\begin{align}\label{model:offline}
\tag{\Problem(\requests)}
    \max\  &\sum\limits_{\request \in \requests}
    \price(\requesttype) \cdot  y_{\request} &  \\
&
    \sum\limits_{\coach \in \coaches}  x_{\request,\coach} = y_{\request}, 
    \forall \request  \in \mathcal{R}  \nonumber \\
&
    \sum\limits_{\substack{(\request,\coach) \in \assignments \\ 
    \leg \in \setLegs(\requesttype)}} \numPass(\requesttype) \cdot x_{\request,\coach} \leq \reservedcapacity,
    \forall (\leg,\coach) \in  \setLegs \times \coaches  \nonumber \\    
& 
    y_{\request}  \in \{0,1\}, x_{\request,\coach} \in \{0,1\}, 
    \forall (\request,\coach) \in \assignments \nonumber
\end{align}
Formulation~\ref{model:offline} uses binary variable~\emphmath{x_{\request,\coach}} to represent the assignment of request~$\request$ to coach~$\coach$, and binary variables~\emphmath{y_{\request}} to indicate that request~$\request$ has been assigned. The objective function represents the revenue collected from all assigned requests. The first set of constraints synchronizes~$\boldsymbol{x}$ and~$\boldsymbol{y}$ and forbids multiple assignments of the same request, i.e., $y_{\request}$ is activated if and only if~$x_{\request,\coach} = 1$ for exactly one~$\coach$ in~$\coaches$. The second set of constraints controls the capacity utilization of each coach per leg. Variables~$\boldsymbol{y}$ are redundant, but we incorporate them into~\ref{model:offline} because we later extend this formulation to solve~\Problem-FCFS.

\subsection{\Problem in the Random Order Model}\label{sec:random order model}

\citet{naori2019online} present an asymptotic $\frac{1}{4|\setLegs|+2}$-competitive algorithm for~\Problem in the random order model and show that this ratio is asymptotically optimal, i.e., without additional assumptions, we cannot find an algorithm that eliminates the dependence on~$\frac{1}{|\setLegs|}$. 
However, \citet{naori2019online} must consider scenarios where the maximum fraction~$\delta$ that an item may occupy from the total packing capacity in any given dimension is large  (e.g., $\delta \approx 1.0$); such cases are not observed in practical instances of~\Problem, where~$\delta \approx 0.06$.
We study and analyze an alternative algorithm 
that explores knowledge about~$\delta$ to obtain a stronger asymptotic competitive ratio for the cases where~$\delta$ is small; in particular, we obtain a competitive ratio that does not depend on~$|\setLegs|$ and is larger than $\frac{1}{4|\setLegs|+2}$ for~$\delta \leq 0.9$. 

\subsubsection{Description of the Algorithm}
Algorithm~\ref{alg:NR} describes~$\texttt{NR}(q)$, an iterative two-phase procedure to solve~\Problem in the random order model. $\texttt{NR}(q)$ relies on the solution of~$\Problem(\requests_\timetogo)_{LR}$, the linear programming (LP) relaxation of~$\Problem(\requests_\timetogo)$, to make reject-or-assign decisions about~$\request_\timetogo$. The parameter~$q$ is a number in~$[0,1]$ that defines the fraction of~$\requests$ handled in each phase, as described next.

\begin{algorithm}
\begin{algorithmic}[1]
\footnotesize
\State set~$\vecx = \boldsymbol{0}, \veccapacity = \veccapacity_1$;
\For{every incoming request~$\request_\timetogo$}
\If{ $\timetogo > q\numrequests$ and  $\coaches_{\veccapacity,\request_\timetogo} \neq \emptyset$  }
\State  obtain an optimal solution~$\vecx^{(\timetogo)}$ to $\Problem(\requests_\timetogo)_{LR}$;
\State sample value $z \in [0,1]$ uniformly at random;
\If{$\sum\limits_{\coach \in \setCoaches} x^{(\timetogo)}_{\request_\timetogo,\coach} \geq z$  }
\State set $\vecx_{\request_\timetogo,\coach} = 1$ and $\veccapacity = \veccapacity - \seatdemand^{\coach,  \request_{\timetogo}}$
for $\coach = \min(\coaches_{\veccapacity,\request_{\timetogo}})$;
\EndIf{}
\EndIf{}
\EndFor{}
\end{algorithmic}
\caption{$\texttt{NR}(q)$}
\label{alg:NR} 
\end{algorithm}

$\texttt{NR}(q)$ uses~$\vecx \in \{0,1\}^{\requests \times \coaches}$ and~$\veccapacity$ to bookkeep the assignments and the residual capacity, respectively. The first~$q\numrequests$ requests arrive during the \textit{sampling phase}, in which $\texttt{NR}(q)$ rejects all requests. After the sampling phase, $\texttt{NR}(q)$ proceeds with the \textit{packing phase}, in which the reject-or-assign decision for each request~$\request_\timetogo$ is based on a  solution for~$\Problem(\requests_\timetogo)_{LR}$. Requests that cannot be assigned because the residual capacity is insufficient (i.e., if $\coaches_{\veccapacity,\request_{\timetogo}} = \emptyset$) are rejected. Otherwise, $\texttt{NR}(q)$ adopts a randomized approach, which allows it to avoid low-utility requests: $\texttt{NR}(q)$ assigns~$\request_\timetogo$  to the  coach with the smallest index~$\coach$ in~$\coaches_{\veccapacity,\request_{\timetogo}}$  with probability~$\sum_{\coach \in \setCoaches}  x^{(\timetogo)}_{\request_\timetogo,\coach}$,
and rejects~$\request_\timetogo$ with probability $1 - \sum_{\coach \in \setCoaches}  x^{(\timetogo)}_{\request_\timetogo,\coach}$, $\texttt{NR}(q)$. 

\subsubsection{Analysis of $\texttt{NR}(q)$}

Let~\emphmath{\requests^{(\timetogo)}} be the subset of requests in~$\requests_\timetogo$ assigned by $\texttt{NR}(q)$
and~\emphmath{O^{(\timetogo)}_{\leg}} be the number of occupied seats for leg~$\leg$  across all coaches. For ease of notation, we assume that the original seating capacity~$\reservedcapacity$ per leg and coach is normalized to 1, so the maximum resource consumption of a request per leg is~$\delta$, and the total seating capacity for a given leg (across all coaches) is~$|\coaches|$. Remark~\ref{remark:blocking} characterizes a sufficient condition on the current residual capacity that enables the assignment of \textit{any} incoming request.

\begin{remark}[Safe assignments]\label{remark:blocking} Request~$\request_{\timetogo}$ can be assigned if 
\notation{\overline{O}^{(\timetogo)}}{\max\limits_{\leg \in \setLegs}O^{(\timetogo)}_{\leg}}
$  \leq (1 - \delta) |\coaches|$.
\end{remark}
\proof{Proof:} 
The result follows directly from the pigeonhole principle, as at least one coach must exist with sufficient residual capacity across all legs in~$\setLegs(\request_{\timetogo})$ if the inequality holds. 
\medskip
\endproof

Lemma~\ref{lemma:Expected Optimal Utility}  provides a lower bound on the expected revenue~$\mathop{\mathbb{E}}[\price(\request_\timetogo)]$ gained from any arbitrary request~$\request_\timetogo$. From the uniformity of the arrival orders, this fraction is identical (in expectation) across all requests. 
\begin{lemma}[Expected Utility]\label{lemma:Expected Optimal Utility}
    The expected revenue collected from request~$\request_\timetogo$ to OPT is $            \mathop{\mathbb{E}}[ \price(\request_{\timetogo})]
        \geq 
        \frac{OPT}{n}$.
\end{lemma}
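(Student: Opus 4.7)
The plan is to exploit the random order assumption: since the arrival order is a uniformly random permutation of $\requests$, the identity of the request in position $\timetogo$ is itself uniformly distributed over $\requests$. So the expected revenue that OPT extracts from $\request_\timetogo$ is the average over requests of OPT's contribution from each one, which equals $\mathrm{OPT}/n$.

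More concretely, the first step is to fix the instance and let $\requests^\star \subseteq \requests$ denote the set of requests assigned in some optimal offline assignment plan, so that $\mathrm{OPT} = \sum_{\request \in \requests^\star} \price(\requesttype(\request))$. Define $u(\request) \coloneqq \price(\requesttype(\request)) \cdot \mathbb{1}[\request \in \requests^\star]$ to be the revenue OPT collects from request $\request$; note that $u(\request) = 0$ whenever OPT rejects $\request$. Then $\sum_{\request \in \requests} u(\request) = \mathrm{OPT}$.

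The second step is to apply the random order assumption. Since every permutation of $\requests$ is equally likely, $\Pr[\request_\timetogo = \request] = 1/n$ for each $\request \in \requests$. Consequently,
\begin{equation*}
\mathop{\mathbb{E}}[\price(\request_\timetogo)] \;\geq\; \mathop{\mathbb{E}}[u(\request_\timetogo)] \;=\; \sum_{\request \in \requests} u(\request)\cdot \Pr[\request_\timetogo = \request] \;=\; \frac{1}{n}\sum_{\request \in \requests} u(\request) \;=\; \frac{\mathrm{OPT}}{n},
\end{equation*}
which gives the desired lower bound. The first inequality is in place because $\price(\request_\timetogo)$ is the (nonnegative) revenue of request $\request_\timetogo$ whether or not OPT accepts it, so it dominates the OPT-restricted quantity $u(\request_\timetogo)$.

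There is no real obstacle here: the argument is a one-line application of linearity of expectation once the uniform distribution of $\request_\timetogo$ is recorded. The only thing worth being careful about is to make explicit that the randomness is over the arrival order (the instance itself and the optimal offline plan are deterministic given $\requests$), so that $u(\request)$ is a deterministic quantity and only $\request_\timetogo$ is random.
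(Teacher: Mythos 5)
Your proof is correct and takes essentially the same approach as the paper: both decompose $OPT$ over an optimal offline set $\requests^*$ and use the uniformity of the arrival order to conclude that the $\timetogo$-th arrival's expected contribution to $OPT$ is $OPT/n$. The paper routes through the prefix $\requests_\timetogo$ (expected fraction of $\requests^*$ among the first $\timetogo$ arrivals, then conditioning on which one is the $\timetogo$-th), whereas you observe directly that $\request_\timetogo$ is uniform over $\requests$, but this is a presentational difference only.
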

\proof{Proof:}
Let~$\requests^*$ be the set of assigned requests in an arbitrary optimal solution to~\Problem, i.e., $\requests^*$ attains~$OPT$. From the uniformity of the arrival orders, it follows that for every~$\timetogo$ in~$[\numrequests]$, a fraction~$\frac{\timetogo}{n}$ of the elements in~$\requests^*$ belong to~$\requests_\timetogo$ in expectation. Moreover, in expectation, the objective value attained by requests in~$\requests_\timetogo \cap \requests^*$ is~$\frac{\timetogo}{n}OPT$. Lastly, from the uniformity assumption, each request in~$\requests_\timetogo$ is the~$\timetogo$-th incoming one with probability~$\frac{1}{\timetogo}$, and by combining those expectations, we conclude that the expected contribution of~$\request_\timetogo$ to~$OPT$ is~$\frac{1}{n}OPT$. We note that a similar result has been proven by~\citet{kesselheim2018primal}.  
\medskip
\endproof

Lemma~\ref{lemma:Profit per arrival} provides a lower bound for the expected revenue collected from each request assigned during the packing phase. The result requires the assumption that $q \geq 
\frac{|\coaches|}{\delta\numrequests}$, i.e., $\texttt{NR}(q)$ must reject the first~$\frac{|\coaches|}{\delta}$ requests, which is the minimum number of requests needed to sell out all seats for one leg. Intuitively, the idea is that information about a significant number of requests provides sufficient information about the items composing an optimal solution. This assumption is asymptotically mild, though, as the lower bound on the length of the sampling phase depends solely on~$|\coaches|$ and~$\delta$, i.e., it is constant in terms of~$\numrequests$.
\begin{lemma}[Profit per arrival]\label{lemma:Profit per arrival}
If $q \geq 
\frac{|\coaches|}{\delta\numrequests}$,
    the expected revenue collected from~$\request_\timetogo$ for $\timetogo \geq q \numrequests$  is
    \begin{eqnarray*}
            \mathop{\mathbb{E}}[ \price(\request_{\timetogo})]
        \geq \left[1 - \frac{1}{(1 - \delta)}  \ln{\left(\frac{\timetogo-1}{qn}\right)}  \right]     
\frac{OPT}{n}.
        \end{eqnarray*}
\end{lemma}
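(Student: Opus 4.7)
The plan is to write the expected revenue from $r_\timetogo$ as an ``LP value'' term minus a ``blocking penalty'' correction, and then bound each piece via LP feasibility combined with the same random-order symmetry that powers Lemma~\ref{lemma:Expected Optimal Utility}.

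First, I would exploit the independence of the uniform sample $z$ from everything else to rewrite the expected revenue collected from $r_\timetogo$ as
\[
\mathop{\mathbb{E}}\Bigl[\price(r_\timetogo)\cdot\textstyle\sum_{\coach\in\coaches} x^{(\timetogo)}_{r_\timetogo,\coach}\cdot\mathbb{1}[\coaches_{\veccapacity,r_\timetogo}\neq\emptyset]\Bigr].
\]
Writing $\mathbb{1}[A_\timetogo]=1-\mathbb{1}[\overline{A}_\timetogo]$ for the feasibility event and using $\sum_c x^{(\timetogo)}_{r_\timetogo,c}\leq 1$, this splits into $\mathop{\mathbb{E}}\bigl[\price(r_\timetogo)\sum_c x^{(\timetogo)}_{r_\timetogo,c}\bigr]-\mathop{\mathbb{E}}\bigl[\price(r_\timetogo)\cdot\mathbb{1}[\overline{A}_\timetogo]\bigr]$.

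For the first term, I would lift the Lemma~\ref{lemma:Expected Optimal Utility} argument to the LP: restricting an optimal $\Problem(\requests)_{LR}$ solution to the unordered set $S_\timetogo$ of the first $\timetogo$ arrivals is LP-feasible for $\Problem(S_\timetogo)_{LR}$, which gives $\mathop{\mathbb{E}}[\mathrm{LP}(S_\timetogo)]\geq(\timetogo/\numrequests)\cdot OPT$. Conditional on $S_\timetogo$, $r_\timetogo$ is uniform over $S_\timetogo$, so $\mathop{\mathbb{E}}\bigl[\price(r_\timetogo)\sum_c x^{(\timetogo)}_{r_\timetogo,c}\,\big|\,S_\timetogo\bigr]=\mathrm{LP}(S_\timetogo)/\timetogo$, and taking expectations yields the clean bound $OPT/\numrequests$.

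For the second term, Remark~\ref{remark:blocking} reduces $\mathbb{1}[\overline{A}_\timetogo]$ to the event $\overline{O}^{(\timetogo)}>(1-\delta)|\coaches|$, so Markov's inequality turns the problem into one of bounding $\mathop{\mathbb{E}}[\overline{O}^{(\timetogo)}]$. For each leg $\leg$, I would apply the same symmetry to the LP capacity constraint $\sum_r \numPass(r)\mathbb{1}[\leg\in\setLegs(r)]\sum_c x^{(j)}_{r,c}\leq |\coaches|$: the expected contribution of $r_j$ to leg $\leg$ is at most $|\coaches|/j$, and summing $j$ from $q\numrequests+1$ to $\timetogo-1$ yields a harmonic sum bounded by $|\coaches|\ln((\timetogo-1)/(q\numrequests))$. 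The hypothesis $q\geq|\coaches|/(\delta\numrequests)$ is needed only to ensure this sampling phase is long enough for the logarithm to be meaningful (and for Markov to bite below~$1$). Dividing by $(1-\delta)|\coaches|$ produces exactly the coefficient $\ln((\timetogo-1)/(q\numrequests))/(1-\delta)$ that appears in the statement.

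The main obstacle I foresee is the last step: converting per-leg occupation bounds into a bound on $\mathop{\mathbb{E}}[\overline{O}^{(\timetogo)}]$ without paying the $|\setLegs|$ factor that a naive union bound would introduce. Closing this gap requires an argument subtler than Markov on each leg independently --- for example, controlling $\mathop{\mathbb{E}}[\overline{O}^{(\timetogo)}]$ directly via the per-coach LP constraints together with the smallness of $\delta$ (each request contributes at most $\delta$ to any single leg), or pairing the penalty term $\mathop{\mathbb{E}}[\price(r_\timetogo)\mathbb{1}[\overline{A}_\timetogo]]$ with the \emph{specific} legs in $\setLegs(r_\timetogo)$ so that the LP symmetry collapses the resulting weighted sum back into the same single harmonic series. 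Once this refinement is in place, combining the two bounds gives the advertised multiplicative guarantee.
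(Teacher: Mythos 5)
Your proposal follows essentially the same route as the paper's proof: the value of $\request_\timetogo$ is controlled by the random-order symmetry underlying Lemma~\ref{lemma:Expected Optimal Utility}, per-leg occupation is bounded via LP feasibility and a harmonic sum of $|\coaches|/k$ terms, and Remark~\ref{remark:blocking} plus Markov's inequality convert that into the non-blocking probability $1-\frac{1}{1-\delta}\ln\bigl(\frac{\timetogo-1}{qn}\bigr)$, so the subtractive decomposition you propose is just the expanded form of the paper's multiplicative one. The single step you flag as an obstacle --- passing from per-leg expectation bounds to a bound on $\mathop{\mathbb{E}}[\overline{O}^{(\timetogo)}]$ without paying a $|\setLegs|$ union-bound factor --- is not handled more subtly in the paper: the authors simply assert that the per-arrival utilization bound ``holds for any leg, including the one for which the residual capacity is the lowest'' and apply it to the maximizing leg as though it were fixed, so your concern points at a looseness in the published argument (since $\mathop{\mathbb{E}}[\max_\leg O^{(\timetogo)}_\leg]\geq\max_\leg \mathop{\mathbb{E}}[O^{(\timetogo)}_\leg]$) rather than at an ingredient you failed to find.
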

\proof{Proof:}
For each request~$\request_\timetogo$ arriving during the packing phase (i.e., when $\timetogo \geq q \numrequests$), $\texttt{NR}(q)$ obtains an optimal solution~$\vecx^{(\timetogo)}$ to~\textbf{Offline}$(\requests_\timetogo)_{LR}$. From the feasibility of~$\vecx^{(\timetogo)}$ and the uniformity of the arrival sequences, the average utilization of a leg~$\leg$ by request~$\request_\timetogo$ is~$\frac{|\coaches|}{\timetogo}$, which is the ratio between the total (normalized) seating capacity~$|\coaches|$ for leg~$\leg$ and the number of requests in~$\requests_{\timetogo}$. This bound is non-trivial because $q \geq \frac{|\coaches|}{\delta\numrequests}$ and, consequently, $\timetogo \geq \frac{|\coaches|}{\delta}$, as the maximum resource consumption of any request is~$\delta$. This observation holds for any leg, including the one for which the residual capacity is the lowest (i.e., the leg~$\leg$ such that~$\overline{O}^{(\timetogo)} = O^{(\timetogo)}_\leg$). Therefore, the expected value of~$\overline{O}^{(\timetogo)}$ upon the arrival of~$\request^{(\timetogo)}$  is such that
    \begin{eqnarray*}
    \mathop{\mathbb{E}}\left[\overline{O}^{(\timetogo)}\right] 
    \leq  
    \sum_{k = qn + 1}^{\timetogo-1}\frac{|\coaches|}{k}\left(
\sum\limits_{\coach \in \setCoaches} 
x^{(k)}_{\request_k,\coach}
\right)
   \leq
   \sum_{k = qn + 1}^{\timetogo-1}\frac{|\coaches|}{k}
\leq
   |\coaches| \ln{\left(\frac{\timetogo-1}{qn}\right)}.
    \end{eqnarray*}
    The first inequality follows because
$\texttt{NR}(q)$ only incorporates requests arriving during the knapsack phase (i.e., the train is empty upon the arrival of request~$\request_{q\numrequests+1}$). The  second inequality holds because the the assignment probability~$\sum_{\coach \in \setCoaches} x^{(k)}_{\request_k,\coach}$  of~$\request_{\timetogo}$ is not larger than 1. Finally, the transformation used in the last inequality follows from~$\sum\limits_{x=a}^{b}\frac{1}{x} \leq \int\limits_{a-1}^{b}\frac{1}{x} dx$. The combination of the expectation above with Remark~\ref{remark:blocking} and Markov's Inequality shows that there is sufficient residual capacity to assign request~$\request_{\timetogo}$ with probability
    \begin{eqnarray*}
    \Pr\left[\overline{O}^{(\timetogo)} < (1 - \delta) |\coaches|\right] 
            &=& 
                1 - \Pr\left[\overline{O}^{(\timetogo)}\geq (1 - \delta)  |\coaches|\right] 
            \geq
                1 - \frac{|\coaches| \ln{\left(\frac{\timetogo-1}{qn}\right)}}{(1 - \delta)|\coaches|} 
            =
                1 - \frac{1}{ (1 - \delta) }  \ln{\left(\frac{\timetogo-1}{qn}\right)}.
    \end{eqnarray*}
    Finally, the result follows from the combination of the bound above and  Lemma~\ref{lemma:Expected Optimal Utility}. 
    \medskip
\endproof

Theorem~\ref{thm:randomorder} aggregates the expected returns across all arrivals in the packing phase to derive the performance guarantees of~$\texttt{NR}(q)$. Note that the competitive ratio of~$\texttt{NR}(q)$ does not depend on~$|\setLegs|$.

\begin{theorem}\label{thm:randomorder}
    $\texttt{NR}(q)$ is $\left(1 - \frac{\ln{(2-\delta)}}{1-\delta}\right)$-competitive for~$q = \frac{1}{2-\delta}$.
\end{theorem}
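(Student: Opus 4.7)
The plan is to combine Lemma~\ref{lemma:Profit per arrival} with a summation over the packing phase, approximate the resulting logarithmic sum by an integral, and then specialize to $q = 1/(2-\delta)$. Because $1/(2-\delta)$ is a positive constant bounded away from $0$, the hypothesis $q \geq |\coaches|/(\delta \numrequests)$ of Lemma~\ref{lemma:Profit per arrival} holds asymptotically, so the pointwise bound applies to every request $\request_\timetogo$ with $\timetogo > q\numrequests$. By linearity of expectation,
$$\mathop{\mathbb{E}}[\texttt{NR}(q)] \;=\; \sum_{\timetogo = q\numrequests + 1}^{\numrequests} \mathop{\mathbb{E}}[\price(\request_\timetogo)] \;\geq\; \sum_{\timetogo = q\numrequests + 1}^{\numrequests}\left[1 - \frac{1}{1-\delta}\ln\left(\frac{\timetogo-1}{q\numrequests}\right)\right]\frac{OPT}{\numrequests}.$$

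Next, I would replace the discrete logarithmic sum by an integral. Since $\ln(x/(q\numrequests))$ is monotonically increasing in $x$,
$$\sum_{\timetogo = q\numrequests + 1}^{\numrequests}\ln\left(\frac{\timetogo-1}{q\numrequests}\right) \;\leq\; \int_{q\numrequests}^{\numrequests}\ln\left(\frac{x}{q\numrequests}\right) dx,$$
and the substitution $u = x/(q\numrequests)$ together with the antiderivative $\int \ln u \, du = u \ln u - u$ evaluates the right-hand side to $-\numrequests \ln q + q\numrequests - \numrequests$. Substituting back and combining the two summations gives the closed-form lower bound
$$\mathop{\mathbb{E}}[\texttt{NR}(q)] \;\geq\; \left[(1-q) + \frac{1 - q + \ln q}{1-\delta}\right] \cdot OPT.$$

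Finally, I would substitute $q = 1/(2-\delta)$. Using $1-q = (1-\delta)/(2-\delta)$ and $-\ln q = \ln(2-\delta)$, the two non-logarithmic contributions collapse to
$$\frac{1-\delta}{2-\delta} + \frac{1}{1-\delta}\cdot\frac{1-\delta}{2-\delta} \;=\; \frac{(1-\delta)+1}{2-\delta} \;=\; 1,$$
leaving the stated ratio $1 - \ln(2-\delta)/(1-\delta)$. As a sanity check, differentiating the bracketed function of $q$ and setting the derivative to zero identifies $q = 1/(2-\delta)$ as the unique maximizer on $(0,1)$, which explains the choice. The main obstacle is tracking the cancellations in this last algebraic step, since the contribution $(1-q)$ and the $(1-q)/(1-\delta)$ piece must telescope cleanly against the $\ln q$ term; the integral bound, the monotonicity argument, and the invocation of Lemma~\ref{lemma:Profit per arrival} are otherwise routine.
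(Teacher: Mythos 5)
Your proposal is correct and follows essentially the same route as the paper's proof: sum the per-arrival bound of Lemma~\ref{lemma:Profit per arrival} over the packing phase, bound the logarithmic sum by $\int_{qn}^{n}\ln(x/(qn))\,dx = qn - n - n\ln q$ using monotonicity, pass to the asymptotic bound $\left[(1-q) + \tfrac{1-q+\ln q}{1-\delta}\right]OPT$, and optimize at $q = \tfrac{1}{2-\delta}$. The only (welcome) additions are your explicit verification of the algebraic cancellation at $q = \tfrac{1}{2-\delta}$ and the derivative check confirming it is the maximizer, which the paper merely asserts.
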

\proof{Proof:}
    The expected objective value of the solution identified by the algorithm is the sum of the expected values of each incoming item. By adding up the terms over all~$\timetogo \in [qn+1,n]$, we obtain:
    \begin{eqnarray*}
        \mathop{\mathbb{E}}[p]
            &\geq&
                \frac{OPT}{n}\sum_{\timetogo = qn+1}^n \left( 1 - \frac{1}{(1 - \delta)}  \ln{\left(\frac{\timetogo-1}{qn}\right)} \right)
=
                \frac{OPT}{n} \left[ 
                (n - qn - 1) -  \frac{1}{(1 - \delta)} \sum_{\timetogo = qn+1}^n \left(    \ln{\left(\frac{\timetogo-1}{qn}\right)} \right) 
                \right]
            \\
            &=&
                \frac{OPT}{n} \left[ 
                (n - qn - 1) -  \frac{1}{(1 - \delta)} \sum_{\timetogo = qn}^{n-1} \left(    \ln{\left(\frac{\timetogo}{qn}\right)} \right)
                \right].
    \end{eqnarray*}
As~$\ln{\left(\frac{\timetogo}{qn}\right)}$ is an increasing function in~$\timetogo$, we have
\[
\sum\limits_{\timetogo = qn }^{n-1} \ln{\frac{\timetogo}{qn }} \leq 
\int\limits_{\timetogo = qn}^{n} \ln{\frac{\timetogo}{qn}} \text{d}l
    = 
    n\left(\ln{\frac{n}{qn}} - 1\right) - qn\left(\ln{\frac{qn}{qn}} - 1\right) 
    = 
    qn   - n - n\ln{q}.
\]
By replacing the inequality above, we obtain the asymptotic bound
\begin{eqnarray*}
    \lim\limits_{n \rightarrow \infty}
    \mathop{\mathbb{E}}[p]
        &\geq&
        \lim\limits_{n \rightarrow \infty} \left( 
            \frac{OPT}{n} \left[ 
            (n - qn - 1) -  \frac{1}{(1 - \delta)} \sum_{\timetogo = qn}^{n-1} \left(    \ln{\left(\frac{\timetogo}{qn}\right)} \right)
            \right]
            \right)
        \\
        &\geq&
        \lim\limits_{n \rightarrow \infty} \left( 
            \frac{OPT}{n} \left[ 
            (n - qn - 1) +  \frac{1}{(1 - \delta)} \left( n -  qn  + n\ln{q}  \right)         
            \right]
            \right)
=
            OPT \left[ 
            \left(1 - q \right) +  \frac{1}{(1 - \delta)} \left(1 -  q  + \ln{q}  \right)            
            \right]
            .
\end{eqnarray*}
The value of~$q$ for which the bound above is maximum is~$q = \frac{1}{2 - \delta}$, which gives  $\lim\limits_{\numrequests \rightarrow \infty}
\mathop{\mathbb{E}}[p]
        \geq
        1 - \frac{\ln{(2-\delta)}}{1-\delta}$. 
\endproof

For real-world instances, our analysis shows that $\texttt{NR}(q)$ is~$1 - \frac{\ln{(2-\delta)}}{1-\delta} \approx 0.29$-competitive, versus~$\frac{1}{4|\setLegs|+2} = 0.045$ from the analysis by~\citet{naori2019online}. More generally, our analysis gives better competitive ratios for~$\delta \leq 0.9$ (i.e., if individual requests may occupy more than 90\% of a coach's seating capacity).

\subsection{\Problem with Arrival Rates}

 This subsection studies scenarios where the arrival rates of each request type are known. We present a policy derived from a fluid approximation (or deterministic linear relaxation) of a dynamic programming (DP) formulation of~\Problem. We also show that this policy is asymptotically optimal for practical scenarios.

\subsubsection{Exact Dynamic Programming Formulation}

For any incoming request~$\request_\timetogo$ and residual capacity~$\veccapacity_\timetogo$, the optimal assignment decision is given by the following DP, in which the actions (assignments) are represented by~$\emphmath{\boldsymbol{u}} \in \{0,1\}^{\setCoaches \times \timehorizon}$: 
\begin{eqnarray}\label{eq:dpformulation fixed price}
    J(\veccapacity_\timetogo,\timetogo)
        &\coloneqq&
        \max_{u \in \mathcal{U}(\veccapacity_\timetogo)}
            \underbrace{
            \sum_{\requesttype \in \requesttypes}\arrivalrate_{\requesttype}
            }_{\text{Arrival probability}}
            (
                \underbrace{
                \sum_{\coach \in \coaches}\price({\requesttype})u_{\coach, \requesttype }
                }_{\text{Assignment decision}}
                +  
                \underbrace{
                \survival_{\timetogo} 
                J^{S}\left(\veccapacity_\timetogo -  \sum_{\coach \in \coaches}u_{\coach, \requesttype}\seatdemand_{\coach,  \requesttype},\timetogo+1\right)
                }_{\text{Revenue for future arrivals with adjusted capacity}}
            ).
\end{eqnarray}
The set~$\mathcal{U}(\veccapacity_\timetogo)$ of feasible actions is defined in terms of the residual capacity~$\veccapacity$ as follows: 
\begin{eqnarray}\label{feasible actions}
\mathcal{U}(\veccapacity_\timetogo) 
    \coloneqq 
    \{
    \boldsymbol{u} \in \{0,1\}^{\coaches \times \requesttypes  }: 
    \underbrace{
    q_{\requesttype,\leg}  
    u_{\coach,\requesttype} \leq \reservedcapacity_{\coach,\leg} \text{ for every } \leg \in \setLegs(\requesttype)}_{\text{Capacity constraints}}, 
    \underbrace{\sum_{\coach \in \coaches_{\veccapacity,\requesttype} }u_{\coach,\requesttype} \leq 1}_{\text{Assignment constraint}}
    \}.
\end{eqnarray}
The capacity constraints asserts that~$u_{\coach,\requesttype} = 0$ whenever~$\coach \notin \coaches_{\veccapacity,\timetogo}$, i.e., a request can only be assigned to coaches with sufficient residual capacity to accommodate it. The assignment constraint asserts that a request is assigned to at most one coach. We denote the optimal objective value by~\notation{\profit^*}{J(\veccapacity_1,1)}.

\subsubsection{Deterministic Linear Relaxation}
The state space of~\eqref{eq:dpformulation fixed price} is exponentially large, making it impractical. Therefore, we consider the following deterministic linear relaxation:
\begin{align}\label{model:fluid}
\tag{\textbf{Fluid}}
  \emphmath{\overline{\profit}} \coloneqq  \max\  &\sum\limits_{(\requesttype,\timetogo,\coach) \in \requesttypes \times \timehorizon \times \coaches} Pr[X \geq \timetogo] \cdot \price(\requesttype) \cdot  x_{\requesttype,\timetogo,\coach} &  \\
&
    \sum\limits_{\coach \in \coaches}       x_{\requesttype,\timetogo,\coach} \leq 
    \arrivalrate_{\requesttype}, 
    \forall  (\requesttype,\timetogo) \in \requesttypes \times \timehorizon  \label{eq:arrival upper bound}
    \\
&
    \sum_{\requesttype \in \requesttypes}
    \sum_{\timetogo \in \timehorizon}
        q_{\requesttype,\leg} x_{\requesttype,\timetogo,\coach}
    \leq
        \reservedcapacity,
    \forall (\leg,\coach) \in  \setLegs \times \coaches  \label{eq:capacity constraint} \\    
& 
    x_{\requesttype,\timetogo,\coach} \geq 0, 
    \forall (\requesttype,\timetogo,\coach) \in \requesttypes \times \timehorizon \times \coaches \nonumber
\end{align}
Formulation~\ref{model:fluid} uses continuous non-negative variables~$x_{\requesttype,\timetogo,\coach}$ to represent the assignment of a request of type~$\requesttype$ arriving in time step~$\timetogo$ to coach~$\coach$. Each term of the objective function weights the expected revenue collected from the assignment represented by~$x_{\requesttype,\timetogo,\coach}$ with the probability~$Pr[X \geq \timetogo]$ that the number of requests is larger than~$\timetogo$. Constraints~\eqref{eq:arrival upper bound} limit the sum of the assignments involving a request of type~$\requesttype$ arriving at time step~$\timetogo$ across all coaches to the arrival probability~$\arrivalrate_{\requesttype}$ of~$\requesttype$; this family of constraints can be interpreted as the continuous relaxation of the assignment constraints in~\eqref{feasible actions}. Lastly, constraints~\eqref{eq:capacity constraint} control the capacity utilization of each coach per leg and correspond to the capacity  constraints in~\eqref{feasible actions}. 

\subsubsection{Upper Bound Guarantees}

We show that~$\overline{\profit} \geq \profit^*$, i.e.,  \ref{model:fluid}  provides an upper bound for the exact DP~\eqref{eq:dpformulation fixed price}. This result allows us to derive performance guarantees for a policy derived from~\ref{model:fluid}. The proof uses a similar choice of multipliers for a Lagrangian relaxation of~\eqref{eq:dpformulation fixed price} to~\citet{bai2023fluid}. 

\begin{proposition} 
The optimal value of~\ref{model:fluid} is an upper bound for the optimal value of~\eqref{eq:dpformulation fixed price}, i.e., $\overline{\profit} \geq \profit^*$.
\end{proposition}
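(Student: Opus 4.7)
The plan is to adapt the Lagrangian strategy of~\citet{bai2023fluid}: take the LP dual of~\ref{model:fluid} to extract multipliers for the capacity constraints, then prove by backward induction on the Bellman recursion~\eqref{eq:dpformulation fixed price} that the DP value function admits an affine-in-capacity upper bound whose coefficients are the dual multipliers, appropriately rescaled by the survival probabilities.

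First I would take the LP dual of~\ref{model:fluid}, introducing variables $\alpha_{\requesttype,\timetogo} \geq 0$ for the assignment constraints~\eqref{eq:arrival upper bound} and $\mu_{\leg,\coach} \geq 0$ for the capacity constraints~\eqref{eq:capacity constraint}. By strong LP duality, some optimum $(\alpha^*, \mu^*)$ satisfies $\overline{\profit} = \sum_{\requesttype,\timetogo} \arrivalrate_\requesttype\, \alpha^*_{\requesttype,\timetogo} + \sum_{\leg,\coach} \reservedcapacity\, \mu^*_{\leg,\coach}$, and the dual constraint $\alpha_{\requesttype,\timetogo} + \sum_{\leg \in \setLegs(\requesttype)} q_{\requesttype,\leg}\, \mu_{\leg,\coach} \geq Pr[X \geq \timetogo]\, \price(\requesttype)$, which must hold for every $\coach$, pins $\alpha^*_{\requesttype,\timetogo} = \max\{0,\ Pr[X \geq \timetogo]\, \price(\requesttype) - \min_\coach \sum_{\leg} q_{\requesttype,\leg}\, \mu^*_{\leg,\coach}\}$.

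Next I would prove by backward induction on $\timetogo$ the invariant $J(\veccapacity,\timetogo) \leq V(\timetogo) + \sum_{\leg,\coach} \mu^*_{\leg,\coach}\, \veccapacity_{\leg,\coach}/Pr[X \geq \timetogo]$, where $V(|\timehorizon|+1) = 0$ and $V(\timetogo) = \sum_\requesttype \arrivalrate_\requesttype \max\{0,\ \max_\coach(\price(\requesttype) - \sum_\leg q_{\requesttype,\leg}\, \mu^*_{\leg,\coach}/Pr[X \geq \timetogo])\} + \survival_\timetogo\, V(\timetogo+1)$. In the inductive step, the hypothesis is substituted into~\eqref{eq:dpformulation fixed price} and the action set $\mathcal{U}(\veccapacity)$ is enlarged by dropping the capacity portion (keeping only $\sum_\coach u_{\coach,\requesttype} \leq 1$), a valid relaxation that renders the inner maximization separable across request types. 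Two identities then close the invariant: $\sum_\requesttype \arrivalrate_\requesttype = 1$ strips the arrival weighting from the $\mu^{*T}\veccapacity$ term, and $\survival_\timetogo/Pr[X \geq \timetogo+1] = 1/Pr[X \geq \timetogo]$ rescales the multiplier so that step $\timetogo$ produces exactly the $1/Pr[X \geq \timetogo]$ coefficient required by the invariant at time $\timetogo$.

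Finally, unrolling the recursion with $\prod_{k<\timetogo} \survival_k = Pr[X \geq \timetogo]$ yields $V(1) = \sum_\timetogo Pr[X \geq \timetogo] \sum_\requesttype \arrivalrate_\requesttype \max\{0,\ \max_\coach(\price(\requesttype) - \sum_\leg q_{\requesttype,\leg}\, \mu^*_{\leg,\coach}/Pr[X \geq \timetogo])\}$, and the elementary identity $p\, \max\{0, a - b/p\} = \max\{0, p\, a - b\}$ for $p > 0$ collapses each summand into $\arrivalrate_\requesttype\, \alpha^*_{\requesttype,\timetogo}$. Evaluating the invariant at $(\veccapacity_1, 1)$, with $Pr[X \geq 1] = 1$ and every entry of $\veccapacity_1$ equal to $\reservedcapacity$, then gives $\profit^* = J(\veccapacity_1, 1) \leq V(1) + \mu^{*T}\veccapacity_1 = \sum \arrivalrate\, \alpha^* + \sum \reservedcapacity\, \mu^* = \overline{\profit}$. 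The main obstacle will be discovering the correct time-varying rescaling $\mu^*/Pr[X \geq \timetogo]$ of the multiplier: using the raw $\mu^*$ leaves an uncancelled $\survival_\timetogo$ from the Bellman step so the unrolled sum no longer matches the dual objective, and the proof hinges on realising that the probabilistic weight $Pr[X \geq \timetogo]$ appearing in the fluid objective must be mirrored inside the Lagrangian penalty.
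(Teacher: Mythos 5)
Your proposal is correct and is essentially the paper's own argument: both dualize the capacity constraints, relax the action set to keep only the one-coach-per-request condition, run a backward induction whose key ingredient is the rescaling $\survival_\timetogo/Pr[X\geq\timetogo+1]=1/Pr[X\geq\timetogo]$ (the paper's $R_\timetogo$), and close with strong LP duality. The only difference is direction of presentation — you start from the dual of \ref{model:fluid} and verify it as a certificate for the DP, while the paper builds the Lagrangian dual of the DP and identifies its tightest version with \ref{model:fluid}; the underlying computation is the same.
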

\begin{proof}{Proof:}
    Let~$\mathcal{U}_-(\veccapacity)$ be a relaxation of~$\mathcal{U}(\veccapacity)$ that does not consider the capacity constraints on the legs but still preserves the assignment constraint (i.e., each request can be assigned to at most one coach); as~$\mathcal{U}(\veccapacity) \subseteq \mathcal{U}_-(\veccapacity)$, we obtain an upper bound for~\eqref{eq:dpformulation fixed price} by replacing~$\mathcal{U}(\veccapacity)$ with~$\mathcal{U}_-(\veccapacity)$.  We define a Lagrangian relaxation~\emphmath{J^{\boldsymbol{\alpha}}(\veccapacity,\timetogo)} of~\eqref{eq:dpformulation fixed price} over~$\mathcal{U}_-(\veccapacity)$ 
with multipliers~$\alpha_{\leg, \requesttype, \timetogo,\coach}$ associated with the capacity constraints as
\begin{eqnarray}\label{eq:lagrangian relaxation of DP}
    &&    \max_{u \in \mathcal{U}(\veccapacity)_-}
        \left[
            \sum_{\requesttype \in \requesttypes}\arrivalrate_{\requesttype}
            \left(
                \sum_{\coach \in \coaches}\price({\requesttype})u_{\coach,\requesttype }   +  \survival_{\timetogo} 
                J^{\boldsymbol{\alpha}}\left(\veccapacity -  \sum_{\coach \in \coaches}u_{\coach,\requesttype}\seatdemand_{\coach,  \requesttype},\timetogo+1\right)
            \right)
        +
        \sum_{\requesttype \in \requesttypes}
        \sum_{\leg \in \setLegs}
        \sum_{\coach \in \coaches} 
            \alpha_{\leg, \requesttype, \timetogo,\coach} ( \reservedcapacity_{\coach,\leg} - 
            q_{\requesttype,\leg}  
    u_{\coach,\requesttype} )
        \right].
\end{eqnarray}
As in~\citet{bai2023fluid}, we study~$J^{\boldsymbol{\alpha}}(\veccapacity,\timetogo)$ when~$\alpha_{\leg, \requesttype, \timetogo,\coach} = 0$, $0 \leq \timetogo < |\timehorizon|$, and~$\alpha_{\leg, \requesttype, |\timehorizon|,\coach} = \arrivalrate_{\requesttype}  \eta_{\leg,\coach}$. The values of~$\eta_{\leg,\coach}$ are obtained through the solution of a linear program, from which we also derive~\ref{model:fluid}.

First, we show by induction in~$\timetogo$ that, for \notation{R_{\timetogo}}{\survival_\timetogo \survival_{\timetogo+1}\ldots\survival_{|\timehorizon|-1} 
=
\frac{Pr[X \geq |\timehorizon|]}{Pr[X \geq \timetogo]}
} and
\notation{R_{|\timehorizon|}}{1}, we have  
\begin{eqnarray}\label{eq:induction}
J^{\boldsymbol{\alpha}}(\veccapacity,\timetogo)
    &=&
        \sum_{k = \timetogo}^{\timehorizon}
        \sum_{\requesttype \in \requesttypes}
        \arrivalrate_{\requesttype}
        \max\left(
            0,
            \max_{\coach \in \coaches}
            \left(
            \frac{R_\timetogo}{R_k}\price({\requesttype})  
            -R_\timetogo 
            \sum_{\leg \in \setLegs}
                q_{\requesttype,\leg}\eta_{\leg,\coach}
            \right)^+
        \right)
         +
        R_\timetogo
        \sum_{(\leg,\coach) \in \setLegs \times \coaches}
            \eta_{\leg, \coach}  \reservedcapacity_{\coach,\leg}. 
\end{eqnarray}
We use~\notation{K_i}{\sum\limits_{k = \timetogo}^{\timehorizon}
        \sum\limits_{\requesttype \in \requesttypes}
        \arrivalrate_{\requesttype}
        \max\left(
            0,
            \max\limits_{\coach \in \coaches}
            \left(
            \frac{R_\timetogo}{R_k}\price({\requesttype})  
            -R_\timetogo 
            \sum\limits_{\leg \in \setLegs}
                q_{\requesttype,\leg}\eta_{\leg,\coach}
            \right)^+
        \right)}
        to represent the first term in \eqref{eq:induction}.

The base of the induction is $\timetogo = |\timehorizon|$, the last time step. As our analysis is restricted to the contribution of the requests arriving within~$\timehorizon$, we have $J^{\boldsymbol{\alpha}}(\veccapacity,|\timehorizon|+1) = 0$. Therefore, we can write~$J^{\boldsymbol{\alpha}}(\veccapacity,|\timehorizon|)$ for any~$\veccapacity$ as
\begin{eqnarray*}
J^{\boldsymbol{\alpha}}(\veccapacity,|\timehorizon|) 
        &=& 
        \max_{u \in \mathcal{U}_-(\veccapacity)}
        \left[
            \sum_{\requesttype \in \requesttypes}\arrivalrate_{\requesttype}
            \left(
                \sum_{\coach \in \coaches}\price({\requesttype})u_{\coach,\requesttype}  
            \right)
        +
        \sum_{\requesttype \in \requesttypes}
        \sum_{\coach \in \coaches} 
        \sum_{\leg \in \setLegs}
            \arrivalrate_{\requesttype}\eta_{\leg, \coach} ( \reservedcapacity_{\coach,\leg} - q_{\requesttype,\leg}  
    u_{\coach,\requesttype} )
        \right]
        \\
        &=& 
        \max_{u \in \mathcal{U}_-(\veccapacity)}
        \left[
            \sum_{\requesttype \in \requesttypes}
            \sum_{\coach \in\coaches}
            \arrivalrate_{\requesttype}
            u_{\coach,\requesttype}
            \price({\requesttype})  
        -
        \sum_{\requesttype \in \requesttypes}
        \sum_{\coach \in \coaches}
            \arrivalrate_{\requesttype}
            u_{\coach,\requesttype}
            \sum_{\leg \in \setLegs}
                q_{\requesttype,\leg}  
                \eta_{\leg, \coach}
        +
        \sum_{\requesttype \in \requesttypes}
        \sum_{\coach \in \coaches} 
        \sum_{\leg \in \setLegs}
            \arrivalrate_{\requesttype}   
            \eta_{\leg, \coach}  
            \reservedcapacity_{\coach,\leg}
        \right]
        \\
        &=& 
        \max_{u \in \mathcal{U}_-(\veccapacity)}
        \left[
            \sum_{\requesttype \in \requesttypes}
                \arrivalrate_{\requesttype} 
                \sum_{\coach \in \coaches}
                u_{\coach,\requesttype} 
                \left(\price({\requesttype})  
                -\sum_{\leg \in \setLegs} 
                q_{\requesttype,\leg} \eta_{\leg,\coach} \right) 
        \right]
        +
        \sum_{\requesttype \in \requesttypes}
\arrivalrate_{\requesttype}
        \sum_{\coach \in \coaches} 
        \sum_{\leg \in \setLegs}
            \eta_{\leg, \coach}  \reservedcapacity_{\coach,\leg}  
        \\
        &=& 
        \sum_{\requesttype \in \requesttypes}
                \arrivalrate_{\requesttype}
                \max\left(0,
                    \max_{\coach \in \coaches  }
                    \left(\price({\requesttype})  
                    -\sum_{\leg \in \setLegs} q_{\requesttype,\leg} \eta_{\leg,\coach} \right)^+
                    \right)
        +
        \sum_{\coach \in \coaches} 
        \sum_{\leg \in \setLegs}
            \eta_{\leg, \coach}  \reservedcapacity_{\coach,\leg}
            ,  
\end{eqnarray*}
where the last equality follows because~$\sum\limits_{\requesttype \in \requesttypes} \arrivalrate_{\requesttype} = 1$ (thus simplifying the second term) and exactly one assignment decision must be made for each request type, so~$\sum\limits_{\coach \in \coaches}u_{\coach,\requesttype}$ has at most one non-zero term. We obtain the values of~$K_{|\timehorizon|}$ and~$R_{|\timehorizon|}$ indicated in our induction hypothesis, so the base follows.

Next, from the induction hypothesis, we have 
$
J^{\boldsymbol{\alpha}}(\veccapacity,\timetogo+1)
    = 
    K_{\timetogo+1} + R_{\timetogo+1}     \sum_{\leg \in \setLegs}
        \sum_{\coach \in \coaches} 
            \eta_{\leg, \coach} \reservedcapacity_{\coach,\leg}.
$
For~$J^{\boldsymbol{\alpha}}(\veccapacity,\timetogo)$ with~$\timetogo < |\timehorizon|$,  as the dual variables~$\alpha_{\leg, \requesttype, \timetogo,\coach}$ are all equal to zero, we have
\begin{eqnarray*}
    J^{\boldsymbol{\alpha}}(\veccapacity,\timetogo)
    &=& 
        \max_{u \in \mathcal{U}_-(\veccapacity)}
        \left(
            \sum_{\requesttype \in \requesttypes}\arrivalrate_{\requesttype}
            \left(
                \sum_{\coach \in \coaches}
                    \price({\requesttype})u_{\coach, \requesttype }   
                +  
                \survival_{\timetogo} J^{\boldsymbol{\alpha}}
                    \left(\veccapacity - \sum_{\coach \in \coaches}u_{\coach, \requesttype}\seatdemand_{\coach,  \requesttype} , \timetogo+1\right)
            \right)
        \right)
    \\
    &=&
    \max_{u \in \mathcal{U}_-(\veccapacity)}
        \left(
            \sum_{\requesttype \in \requesttypes}\arrivalrate_{\requesttype}
            \left(
                \sum_{\coach \in \coaches}\price({\requesttype})u_{\coach,\requesttype }   +  \survival_{\timetogo}K_{\timetogo+1} 
                + 
                \survival_{\timetogo}R_{\timetogo+1} \left( 
                    \sum_{\leg \in \setLegs}
                    \sum_{\coach \in \coaches} 
                        \eta_{\leg, \coach}  (\reservedcapacity_{\coach,\leg} - u_{\coach,\requesttype} q_{\requesttype,\leg} ) 
                \right)
            \right)
        \right) 
    \\
    &=&
    \max_{u \in \mathcal{U}_-(\veccapacity)}
            \sum_{\requesttype \in \requesttypes}
            \arrivalrate_{\requesttype}
        \left[ 
            \left(
                \sum_{\coach \in \coaches}
                    \price({\requesttype})
                     u_{\coach,\requesttype }
                - 
                \survival_{\timetogo}R_{\timetogo+1}  
                    \sum_{\leg \in \setLegs}
                    \sum_{\coach \in \coaches} 
                        \eta_{\leg, \coach} 
q_{\requesttype,\leg} u_{\coach,\requesttype } 
            \right)
        +
\left( 
        \survival_{\timetogo}K_{\timetogo+1} 
        +
        \survival_{\timetogo}R_{\timetogo+1}
                    \sum_{\leg \in \setLegs}
                    \sum_{\coach \in \coaches} 
                        \eta_{\leg, \coach}  \reservedcapacity_{\coach,\leg}   
\right)
\right]
    \\
    &=&
            \sum_{\requesttype \in \requesttypes}
            \arrivalrate_{\requesttype}
            \max\left( 
                0,
                \max_{\coach \in \coaches}
            \left(    
                        \price({\requesttype})   
                    - 
                    R_{\timetogo}  
                        \sum_{\leg \in \setLegs}
                            \eta_{\leg, \coach}  q_{\requesttype,\leg}
                \right)^+
                \right)
        +  
        \survival_{\timetogo}K_{\timetogo+1} 
        +
        R_{\timetogo}
                    \sum_{\leg \in \setLegs}
                    \sum_{\coach \in \coaches} 
                        \eta_{\leg, \coach}  \reservedcapacity_{\coach,\leg},   
\end{eqnarray*}
where the second equality follows from the induction hypothesis and the substitutions in the last equality hold because, by definition, $R_{\timetogo} = \survival_{\timetogo}R_{\timetogo+1}$. By expanding the first two terms, we obtain
\begin{eqnarray*}
\sum_{\requesttype \in \requesttypes}
            \arrivalrate_{\requesttype}
            \max\left(0,
                \max_{\coach \in \coaches}
            \left(    
                        \price({\requesttype})   
                    - 
                    R_{\timetogo}  
                        \sum_{\leg \in \setLegs}
                            \eta_{\leg, \coach}  q_{\requesttype,\leg}
                \right)^+
                \right)
    +       
        \survival_{\timetogo}
        \left[
            \sum_{k = \timetogo+1}^{\timehorizon}
            \sum_{\requesttype \in \requesttypes}
                \arrivalrate_{\requesttype}
                \max\left(
                    0,
                    \max_{\coach \in \coaches}
                        \left(
                        \frac{R_{\timetogo+1}}{R_k}\price({\requesttype})  
                        -R_{\timetogo+1} 
                        \sum_{\leg \in \setLegs}
                            q_{\requesttype,\leg}\eta_{\leg,\coach}
                    \right)^+
                \right)
        \right] 
    = 
    \\
\sum_{\requesttype \in \requesttypes}
            \arrivalrate_{\requesttype}
            \max\left(
                0,
                \max_{\coach \in \coaches}
            \left(    
                      \frac{R_{\timetogo}}{R_{\timetogo}}  \price({\requesttype})   
                    - 
                    R_{\timetogo}  
                        \sum_{\leg \in \setLegs}
                            \eta_{\leg, \coach} q_{\requesttype,\leg} 
                \right)^+
            \right)
+       
        \left[
        \sum_{k = \timetogo+1}^{\timehorizon}
        \sum_{\requesttype \in \requesttypes}
            \arrivalrate_{\requesttype}
            \max\left(
                0,
                \max_{\coach \in \coaches}
                \left(
                \frac{R_{\timetogo}}{R_k}\price({\requesttype})  
                -R_{\timetogo} 
                \sum_{\leg \in \setLegs}
                    q_{\requesttype,\leg}\eta_{\leg,\coach}
                \right)^+
            \right)
        \right]     
=
\\
\sum_{k = \timetogo}^{\timehorizon}
        \sum_{\requesttype \in \requesttypes}
            \arrivalrate_{\requesttype}
            \max\left(
                0,
                \max_{\coach \in \coaches}
                \left(
                \frac{R_{\timetogo}}{R_k}\price({\requesttype})  
                -R_{\timetogo} 
                \sum_{\leg \in \setLegs}
                    q_{\requesttype,\leg}\eta_{\leg,\coach}
                \right)^+
            \right)
            =
    K_{\timetogo}.
\end{eqnarray*}
Therefore, the induction hypothesis~\eqref{eq:induction} holds. From our assumption that~$Pr[X \geq 1] = 1$,  we have~$R_1 = Pr[X \geq |\timehorizon|]$ and, consequently, 
$\frac{R_1}{R_{\timetogo}}  = \frac{Pr[X \geq |\timehorizon|]}{\frac{Pr[X \geq |\timehorizon|]}{Pr[X \geq \timetogo]} } = 
Pr[X \geq \timetogo]
$. Therefore, the expected revenue is given by 
\begin{eqnarray*}
    J^{\boldsymbol{\alpha}}(\veccapacity_1,1) 
        &=& 
        \sum_{k = 1}^{\timehorizon}
        \sum_{\requesttype \in \requesttypes}
        \arrivalrate_{\requesttype}
        \max\left(
            0,
            \max_{\coach \in \coaches}
            \left(
            \frac{R_1}{R_k}\price({\requesttype})  
            -R_1 
            \sum_{\leg \in \setLegs}
                q_{\requesttype,\leg}\eta_{\leg,\coach}
            \right)^+
        \right)
         +
        R_1
        \sum_{\leg \in \setLegs}
        \sum_{\coach \in \coaches} 
            \eta_{\leg, \coach}  \reservedcapacity_{\coach,\leg} 
        \\
        &=&
        \sum_{k = 1}^{\timehorizon}
        \sum_{\requesttype \in \requesttypes}
        \arrivalrate_{\requesttype}
        \max\left(
            0,
            \max_{\coach \in \coaches}
            \left(
            Pr[X \geq k]
            \price({\requesttype})          
            -Pr[X \geq |\timehorizon|]
            \sum_{\leg \in \setLegs}
                q_{\requesttype,\leg}\eta_{\leg,\coach}
            \right)^+
        \right)
         +
        Pr[X \geq |\timehorizon|]
        \sum_{\leg \in \setLegs}
        \sum_{\coach \in \coaches} 
            \eta_{\leg, \coach}  \reservedcapacity_{\coach}.        
\end{eqnarray*}
The tightest Lagrangian relaxation associated with~$\boldsymbol{\alpha}$ is attained by the choice of values for the multipliers~$\eta_{\leg, \coach}$ that minimize~$J^{\boldsymbol{\alpha}}(\veccapacity,1)$. This is given by the following LP:
\begin{align}\label{model:lp}
\tag{\textbf{LP}}
    \min\  \sum_{\timetogo = 1}^{\timehorizon}
        \sum_{\requesttype \in \requesttypes}
        \arrivalrate_{\requesttype}
        {z}_{\timetogo,\requesttype}
         +
        Pr[X \geq |\timehorizon|]
        \sum_{\leg \in \setLegs}
        \sum_{\coach \in \coaches} 
            \eta_{\leg, \coach}  \reservedcapacity_{\coach} 
&
\\
    z_{\timetogo,\requesttype}  
    +  
    Pr[X \geq |\timehorizon|]\sum_{\leg \in \setLegs}
        q_{\requesttype,\leg}\eta_{\leg,\coach}   
    \geq 
    Pr[X \geq \timetogo]\price({\requesttype}),          
    & 
    \forall (\requesttype,\timetogo,\coach) \in \requesttypes \times \timehorizon \times  \coaches
    \\
    \eta_{\leg, \coach} \geq 0, 
    &
    \forall (\leg,\coach) \in \setLegs \times \coaches \nonumber \\
    {z}_{\timetogo,\requesttype} \geq 0,
    &
    \forall (\timetogo,\requesttype) \in \timehorizon \times \requesttypes \nonumber
\end{align}
Finally, for~$Pr[X \geq |\timehorizon|] > 0$, we obtain a dual formulation of~\ref{model:lp} that is  equivalent to~\ref{model:fluid}. Therefore, it follows from strong duality that~$\overline{\profit} =  J^{\boldsymbol{\alpha}}(\veccapacity,1)$, i.e., \ref{model:fluid} provides an upper bound for the exact DP~\eqref{eq:dpformulation fixed price}. 
\end{proof}

\subsubsection{Asymptotically Analysis of Policy Based on Fluid Model}

We investigate the asymptotic performance of the $\arrivalrate$-Policy, which is based on an optimal solution to~\ref{model:fluid}. The $\arrivalrate$-Policy makes resources for request type~$\requesttype$ available in time step~$\timetogo$ with probability~$\sum\limits_{\coach \in \coaches} \frac{x_{\requesttype,\timetogo,\coach}}{\arrivalrate_{\requesttype}}$ whenever the current residual capacity~$\veccapacity_\timetogo$ is sufficiently large to accommodate a request of this type. Each accepted request is arbitrarily assigned to a coach with sufficient residual capacity. Theorem~\ref{thm: fluid optimality} characterizes the performance of $\arrivalrate$-Policy when~$\delta \geq \frac{2}{\reservedcapacity}$. 

\begin{theorem}\label{thm: fluid optimality}
For~$\delta \geq \frac{2}{\reservedcapacity}$,
the expected revenue~$E[\profit]$ attained by the policy is 
\begin{eqnarray}\label{eq: fluid approximation}
    E[\profit]  
\geq
    \left[
    1 - 
|\setLegs|
\exp{\left(  
    -\frac{  
    |\coaches|
            \left(
                \frac{1}{\delta} 
    -\reservedcapacity
            \right)^2 
        }    
        {
            2\reservedcapacity
            + 
            \frac{2}{3} 
                \left(
                   \frac{1}{\delta} 
    -\reservedcapacity
                \right) 
            }  
\right)}
\right]\overline{\profit},     
\end{eqnarray}
where~$\overline{\profit}$ is the optimal objective value of~\ref{model:fluid}.
\end{theorem}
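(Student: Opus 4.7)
The plan is to compare the $\arrivalrate$-Policy against an idealized capacity-free version of itself, and bound the gap by the probability that some leg's utilization ever exceeds the safety threshold identified by Remark~\ref{remark:blocking}. I would first observe that, stripped of the capacity check, the $\arrivalrate$-Policy accepts a type-$\requesttype$ request at time $\timetogo$ with unconditional probability $Pr[X\geq\timetogo]\cdot\arrivalrate_{\requesttype}\cdot\sum_{\coach}x_{\requesttype,\timetogo,\coach}/\arrivalrate_{\requesttype}=Pr[X\geq\timetogo]\sum_{\coach}x_{\requesttype,\timetogo,\coach}$, so the expected revenue of the idealized run is exactly $\overline{\profit}$. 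Coupling the actual and idealized runs on the same arrival sequence and the same coin flips, the two coincide on the event $E$ that the capacity check never fires, and hence $\mathbb{E}[\profit]\geq\overline{\profit}\cdot\Pr[E]=\overline{\profit}(1-\Pr[E^c])$.

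Next I would use Remark~\ref{remark:blocking} to convert $E^c$ into a per-leg tail event. Because the total utilization $O^{(\timetogo)}_{\leg}$ is nondecreasing in $\timetogo$, the pigeonhole condition there can be violated at some $\timetogo$ only if the \emph{final} utilization $Y_{\leg}$ on some leg exceeds $(1-\delta)|\setCoaches|\reservedcapacity$ (in unnormalized units). A union bound gives
\[
\Pr[E^c]\leq\sum_{\leg\in\setLegs}\Pr\!\left[Y_{\leg}>(1-\delta)|\setCoaches|\reservedcapacity\right],
\]
which already explains the $|\setLegs|$ factor outside the exponential in~\eqref{eq: fluid approximation}.

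The third step is concentration. I would write $Y_{\leg}=\sum_{\timetogo}W_{\leg,\timetogo}$, where $W_{\leg,\timetogo}$ is the number of seats on leg $\leg$ consumed at time $\timetogo$ by the idealized policy. The $W_{\leg,\timetogo}$ are independent across $\timetogo$ (by the Markovian arrival assumption and the independence of the policy's coin flips), satisfy $0\leq W_{\leg,\timetogo}\leq\numPass_{\max}=\delta\reservedcapacity$, and obey $W_{\leg,\timetogo}^{2}\leq\numPass_{\max}\cdot W_{\leg,\timetogo}$, which bounds the variance by $\numPass_{\max}\cdot\mathbb{E}[Y_{\leg}]$; meanwhile, summing constraint~\eqref{eq:capacity constraint} over coaches gives $\mathbb{E}[Y_{\leg}]\leq|\setCoaches|\reservedcapacity$. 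Rescaling each increment by $\numPass_{\max}$ so that the bounded-increment constant becomes $1$ and the capacity threshold becomes $|\setCoaches|/\delta$, Bernstein's inequality on the deviation $|\setCoaches|(1/\delta-\reservedcapacity)$ (positive, in the correct sign dictated by the rescaling) yields exactly the exponent $-|\setCoaches|(1/\delta-\reservedcapacity)^{2}/(2\reservedcapacity+\tfrac{2}{3}(1/\delta-\reservedcapacity))$ after bookkeeping the variance and range substitutions; the hypothesis $\delta\geq 2/\reservedcapacity$ is what keeps the Bernstein denominator strictly positive. Plugging back into $\mathbb{E}[\profit]\geq\overline{\profit}(1-\Pr[E^c])$ closes the proof.

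The main obstacle will be the concentration step, because the naive seat-count view has $\mathbb{E}[Y_{\leg}]\leq|\setCoaches|\reservedcapacity$, which already exceeds the safety threshold $(1-\delta)|\setCoaches|\reservedcapacity$ and therefore leaves no room for a direct Bernstein upper-tail bound. The trick is to choose the right units: measuring sums and deviations in multiples of $\numPass_{\max}$ reinterprets the fluid constraint as a bound of $1/\delta$ maximum-size groups per coach, and aligns the threshold $|\setCoaches|(1/\delta-\reservedcapacity)$ with a positive admissible deviation under $\delta\geq 2/\reservedcapacity$. Verifying that this rescaling respects the variance/range inputs of Bernstein and that the subsequent union bound produces only the stated $|\setLegs|$ factor (rather than a spurious $|\setCoaches||\setLegs|$) is the delicate bookkeeping the full proof must execute.
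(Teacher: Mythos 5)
Your overall architecture --- lower-bounding $E[\profit]$ by $\overline{\profit}$ times a uniform bound on the capacity-sufficiency probability, aggregating over coaches via Remark~\ref{remark:blocking}, union-bounding over legs, and finishing with one-sided Bernstein --- is the same skeleton as the paper's proof, and your coupling formulation of the first step is a clean way to justify factoring $\Pr[G_{\requesttype,\timetogo}=1]$ out of the revenue sum. The union bound producing the $|\setLegs|$ factor is exactly as in the paper.

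The gap is in the concentration step, and the rescaling you propose does not repair it. A linear change of units multiplies the mean bound and the threshold by the same factor, so it cannot change the sign of (threshold $-$ mean): if $\mathbb{E}[Y_{\leg}]\le|\coaches|\reservedcapacity$ already exceeds the safety level $(1-\delta)|\coaches|\reservedcapacity$ in seat units, it still does after dividing everything by $\numPass_{\max}=\delta\reservedcapacity$; the rescaled deviation comes out to $-|\coaches|$, not $|\coaches|(1/\delta-\reservedcapacity)$, so you are conflating the rescaled mean bound $|\coaches|/\delta$ with the rescaled threshold. Moreover, your parenthetical claim that $|\coaches|(1/\delta-\reservedcapacity)$ is positive under $\delta\ge 2/\reservedcapacity$ has the sign backwards: that hypothesis gives $1/\delta\le\reservedcapacity/2$, hence $1/\delta-\reservedcapacity\le-\reservedcapacity/2<0$, and one-sided Bernstein gives no nontrivial bound for a nonpositive deviation. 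The paper reaches the stated exponent by concentrating a \emph{different} random variable: the number of attempted requests touching leg $\leg$, i.e., Bernoulli summands $N_{\leg,\timetogo}$ with $\Pr[N_{\leg,\timetogo}=1]=\sum_{\requesttype:\leg\in\setLegs(\requesttype)}\sum_{\coach}x_{\requesttype,\timetogo,\coach}$, whose expectations sum to at most $|\coaches|\reservedcapacity$ via $q_{\requesttype,\leg}\ge1$ and the fluid capacity constraint, against the request-count threshold $|\coaches|/\delta$; Bernstein with range $1$ and variance sum at most $|\coaches|\reservedcapacity$ is precisely what yields the constants $2\reservedcapacity$ and $\tfrac{2}{3}$ in the exponent, whereas your seat-count summands (range $\numPass_{\max}$, variance at most $\numPass_{\max}\mathbb{E}[Y_{\leg}]$) would produce different constants even if the sign problem were resolved. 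Note that the sign of the deviation $|\coaches|(1/\delta-\reservedcapacity)$ is the genuine crux here --- it is nonpositive whenever $\delta\ge1/\reservedcapacity$ --- and a complete argument needs a sharper bound on the expected number of requests per leg (one that actually sits below $|\coaches|/\delta$) before the tail bound is legitimate; your proposal identifies this obstacle correctly but does not supply the missing ingredient.
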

\begin{proof}{Proof:} Let~$\vecx$ be an optimal solution to~\ref{model:fluid}. Request~$\request_{\timetogo}$ arrives with probability~$Pr[X \geq \timetogo]$, and from our Markovian assumption on the arrival rates, $\request_\timetogo$ is of type~$\requesttype$ with probability~$\arrivalrate_{\requesttype}$. As request type~$\requesttype$ is offered in time step~$\timetogo$ with probability~$\sum\limits_{\coach \in \coaches}  \frac{x_{\requesttype,\timetogo,\coach}}{\arrivalrate_{\requesttype}}$, the policy tries to assign a request of type~$\requesttype$ in time step~$\timetogo$ with probability~$\sum\limits_{\coach \in \coaches}x_{\requesttype,\timetogo,\coach}$. Therefore, by using a Bernoulli variable~$G_{\requesttype,\timetogo}$ to denote the event where the residual capacity is sufficiently large to accommodate a request of type~$\requesttype$ in time step~$\timetogo$, we can write the expected revenue~$E[\profit]$ attained by the policy as 
\begin{eqnarray*}
E[\profit] = 
        \sum_{\timetogo \in \timehorizon}
        \sum_{\requesttype \in \requesttypes}
        \price(\requesttype)
        \underbrace{Pr[G_{\requesttype,\timetogo} = 1]}_{\text{Sufficient residual capacity to assign~$\requesttype$}}
        \underbrace{
        Pr[X \geq \timetogo]
        }_{\text{There are at least $\timetogo$ requests.}}
        \underbrace{
        \sum_{\coach \in \coaches}
             x_{\requesttype,\timetogo,\coach}.
            }_{\text{Policy tries to assign a request of type~$\requesttype$.}}       
\end{eqnarray*}
As in~\S\ref{sec:random order model}, we aggregate resource consumption to circumvent technical difficulties with coaches and  analyze seat availability in each leg. Namely, a request of type~$\requesttype$ can be serviced if, for every leg~$\leg$ in~$\setLegs(\requesttype)$, the total number of seats in~$\leg$ that have been already assigned is at most~$|\coaches|\reservedcapacity (1 - \delta)$ across all coaches; recall that~$\reservedcapacity$ is the original seating capacity of each coach. In particular, a lower bound on the number of accepted requests containing leg~$\leg$ in their itinerary is~$
\frac{|\coaches|\reservedcapacity}{\delta \reservedcapacity} 
= 
\emphmath{\frac{|\coaches|}{\delta} } 
$.

We identify a lower bound for~$Pr[G_{\requesttype,\timetogo} = 1]$ by estimating the occupancy of each leg~$\leg$ throughout the entire selling horizon. We define a Bernoulli variable~$N_{\leg,\timetogo}$ indicating whether there is demand for seats in leg~$\leg$ in step~$\timetogo$. The probability of this event is given by the sum of the probabilities with which a request of type~$\requesttype$ such that~$\setLegs(\requesttype)$ contains~$\leg$ appears in time step~$\timetogo$. For each~$\requesttype$, this probability is the product of its arrival rate~$\arrivalrate_{\requesttype}$ with its offering probability~$\sum\limits_{\coach \in \coaches}\frac{x_{\requesttype,\timetogo,\coach}}{\arrivalrate_{\requesttype}}$, so we have 
\begin{eqnarray*}
Var\left[N_{\leg,\timetogo}\right]
\leq
    E\left[N_{\leg,\timetogo}\right]
=
    Pr[N_{\leg,\timetogo}] 
&=& 
    \sum_{\requesttype \in \requesttypes: \leg \in \setLegs(\requesttype)}
    \sum_{\coach \in \coaches}
        \frac{x_{\requesttype,\timetogo,\coach}}{\arrivalrate_{\requesttype}}
        \arrivalrate_{\requesttype}    
       =
    \sum_{\requesttype \in \requesttypes: \leg \in \setLegs(\requesttype)}
    \sum_{\coach \in \coaches}
    x_{\requesttype,\timetogo,\coach}.
\end{eqnarray*}
We are interested in the number of requests involving~$\leg$ across all time steps. For this, we have
\[
\sum_{\timetogo \in \timehorizon}Var\left[N_{\leg,\timetogo}\right]
\leq
    \sum_{\timetogo \in \timehorizon}
        E\left[N_{\leg,\timetogo}\right]
=
    \sum_{\timetogo \in \timehorizon}
    \sum_{\requesttype \in \requesttypes: \leg \in \setLegs(\requesttype)}
    \sum_{\coach \in \coaches}
      x_{\requesttype,\timetogo,\coach}
\leq
    \sum_{\timetogo \in \timehorizon}
    \sum_{\requesttype \in \requesttypes: \leg \in \setLegs(\requesttype)}
    \sum_{\coach \in \coaches}
        q_{\requesttype,\leg} x_{\requesttype,\timetogo,\coach}
\leq
    |\coaches|\reservedcapacity,
\]
where the last two inequalities follow because~$q_{\requesttype,\leg} \geq 1$ if~$\leg \in \setLegs(\requesttype)$ and~$\vecx$ is a feasible solution to~\ref{model:fluid}. Note that the bounds above do not depend on~$|\timehorizon|$, but only on $|\coaches|$ and~$\reservedcapacity$, which are parameters of the problem. Moreover, the bound is trivial, as there are $|\coaches|\reservedcapacity$ seats available in a train, so no more than~$|\coaches|\reservedcapacity$ requests containing leg~$\leg$ can be assigned. Nevertheless, this bound is sufficient for our analysis. Next, we identify an upper bound for the probability with which the number of assigned requests surpasses~$\frac{|\coaches|}{\delta}$:
\begin{eqnarray*}
Pr\left[
    \sum_{\timetogo \in \timehorizon}
        N_{\leg,\timetogo} 
\geq
    \frac{|\coaches|}{\delta}
\right]
&\leq&
Pr\left[
    \sum_{\timetogo \in \timehorizon}
        (N_{\leg,\timetogo} - E[N_{\leg,\timetogo}] ) 
\geq 
 |\coaches|\left( 
    \frac{1}{\delta} 
    -\reservedcapacity
    \right)
\right]
\leq
\exp{\left(  
    -\frac{  
        \frac{|\coaches|^2}{2} 
            \left(
                \frac{1}{\delta} 
    -\reservedcapacity
            \right)^2 
        }    
        {
            \sum\limits_{\timetogo \in \timehorizon}
                E[\left(N_{\leg,\timetogo} - E[N_{\leg,\timetogo}] \right)^2] 
            + 
            \frac{|\coaches|}{3} 
                \left(
                   \frac{1}{\delta} 
    -\reservedcapacity
                \right) 
            }  
\right)}
\\
&=&
\exp{\left(  
    -\frac{  
        \frac{|\coaches|^2}{2} 
            \left(
                \frac{1}{\delta} 
    -\reservedcapacity
            \right)^2 
        }    
        {
            \sum\limits_{\timetogo \in \timehorizon}
                Var[N_{\leg,\timetogo}] 
            + 
            \frac{|\coaches|}{3} 
                \left(
                   \frac{1}{\delta} 
    -\reservedcapacity
                \right) 
            }  
\right)}
\leq
\exp{\left(  
    -\frac{  
        \frac{|\coaches|^2}{2} 
            \left(
                \frac{1}{\delta} 
    -\reservedcapacity
            \right)^2 
        }    
        {
            |\coaches|\reservedcapacity
            + 
            \frac{|\coaches|}{3} 
                \left(
                   \frac{1}{\delta} 
    -\reservedcapacity
                \right) 
            }  
\right)}
\\
&=&
\exp{\left(  
    -\frac{  
        |\coaches|
            \left(
                \frac{1}{\delta} 
    -\reservedcapacity
            \right)^2 
        }    
        {
            2\reservedcapacity
            + 
            \frac{2}{3} 
                \left(
                   \frac{1}{\delta} 
    -\reservedcapacity
                \right) 
            }  
\right).}
\end{eqnarray*}
The first inequality holds because $E\left[\sum\limits_{\timetogo \in \timehorizon}N_{\leg,\timetogo}\right] \leq  \reservedcapacity|\coaches|$. The second inequality follows from the one-sided Bernstein inequality (see, e.g., \cite{cesa2006prediction}); the conditions of the inequality hold, as the random variables~$N_{\leg,\timetogo} - E[N_{\leg,\timetogo}]$ are pairwise independent, have expectation equal to zero, and are bounded by 1. The third inequality  follows because $Var\left[\sum\limits_{\timetogo \in \timehorizon}N_{\leg,\timetogo}\right] \leq |\coaches|\reservedcapacity$. Finally, we cannot have~$\delta = \frac{1}{4\reservedcapacity}$ (as the denominator equals zero in this case), but as~$\delta \geq \frac{1}{k}$, the expression is well-defined.

The lower bound above holds for a single leg~$\leg$, so we apply union bound to derive a bound for~$ Pr[G_{\requesttype,\timetogo} = 1]$, which encompasses all~$|\setLegs|$ legs in the worst case:
\begin{eqnarray}
    Pr[G_{\requesttype,\timetogo} = 1] 
&\geq&
    Pr\left[
        \sum_{\requesttype \in \requesttypes}
            N_{\leg,\timetogo}
        <
        \frac{|\coaches| }{\delta},
        \forall \leg \in \setLegs(\requesttype)
    \right]
=
    1 - Pr\left[  \exists \leg \in \setLegs(\requesttype), 
        \sum_{\requesttype \in \requesttypes}
            N_{\leg,\timetogo}
        \geq
        \frac{|\coaches| }{\delta}
    \right] \nonumber
\\
&\geq&
1 - 
|\setLegs|
\exp{\left(  
    -\frac{  
    |\coaches|
            \left(
                \frac{1}{\delta} 
    -\reservedcapacity
            \right)^2 
        }    
        {
            2\reservedcapacity
            + 
            \frac{2}{3} 
                \left(
                   \frac{1}{\delta} 
    -\reservedcapacity
                \right) 
            }  
\right)} \label{eq:G lower bound}
\end{eqnarray}
The lower bound in~\eqref{eq:G lower bound} holds for all values of~$\requesttype$ and~$\timetogo$. Therefore, we obtain the following bound for the expected objective value~$E[\profit]$ attained by the policy in terms of the optimal objective value~$\overline{\profit}$ of~\ref{model:fluid}: 
\begin{eqnarray*}
    E[\profit]  
&=&             Pr[G_{\requesttype,\timetogo} = 1]
    \sum_{\timetogo \in \timehorizon}
    \sum_{\requesttype \in \requesttypes}
    \sum_{\coach \in \coaches}
        Pr[X \geq \timetogo]
        \price(\requesttype)     
        x_{\requesttype,\timetogo,\coach}
\geq 
    Pr[G_{\requesttype,\timetogo} = 1]\overline{\profit}
\geq
    \left[
    1 - 
|\setLegs|
\exp{\left(  
    -\frac{  
    |\coaches|
            \left(
                \frac{1}{\delta} 
    -\reservedcapacity
            \right)^2 
        }    
        {
            2\reservedcapacity
            + 
            \frac{2}{3} 
                \left(
                   \frac{1}{\delta} 
    -\reservedcapacity
                \right) 
            }  
\right)}
\right]\overline{\profit}.   
\end{eqnarray*}
\end{proof}

By replacing into~\eqref{eq: fluid approximation} the values~$|\coaches| = 20$, $\reservedcapacity = 100$, and~$\delta = 0.06$, as observed in practice, we obtain
\begin{eqnarray*}
    E[\profit]   
&\geq&
    \left[
    1 - 
|\setLegs|
\exp{\left(  
    -\frac{  
    |\coaches|
            \left(
                \frac{1}{\delta} 
    -\reservedcapacity
            \right)^2 
        }    
        {
            2\reservedcapacity
            + 
            \frac{2}{3} 
                \left(
                   \frac{1}{\delta} 
    -\reservedcapacity
                \right) 
            }  
\right)}
\right]\overline{\profit}
=
    \left[
    1 
    - 
    4 \exp{\left(  
    -\frac{  
        10 
            \left(
                16.667 - 100
            \right)^2 
        }    
        {
            66.667
            +
            5.555
        }  
\right)} \right]\overline{\profit} 
\approx \overline{\profit},
\end{eqnarray*}
i.e., $\arrivalrate$-Policy is arbitrarily asymptotically optimal. Finally, the value of~$\delta$ plays an important role in our analysis. Namely, this policy is optimal for every~$\delta \geq \frac{2}{\reservedcapacity}$, i.e., whenever we consider groups of size at least two. Interestingly, our analysis does not yield the same guarantee in scenarios where each request consists of a single person, i.e.,~$\delta = \frac{1}{\reservedcapacity}$. In this case, an optimal policy offers a request of type~$\requesttype$ with probability~$\theta \sum\limits_{\coach \in \coaches} \frac{x_{\requesttype,\timetogo,\coach}}{\arrivalrate_{\requesttype}}$ for a carefully chosen value of~$\theta$, as in~\citet{bai2023fluid}, and the competitive ratio is slightly larger than 0.91. We present the proof of Corollary~\ref{cor: fluid optimality group 1} in the Appendix. 
\begin{corollary}\label{cor: fluid optimality group 1}
    For~$\delta = \frac{1}{\reservedcapacity}$, the expected revenue~$E[\profit]$ attained by the policy is 
\begin{eqnarray*}
    E[\profit]  
\geq
\theta \left(1 - |\setLegs| 
 \exp{\left(  
    -\frac{  
        \frac{3|\coaches|}{2\delta} 
            \left(
                1
                -
                \frac{\delta}{|\coaches|}
                -    
                \theta\reservedcapacity\delta
            \right)^2 
        }    
        {
            1
            -
            \frac{\delta}{|\coaches|}
            +    
            2\theta\reservedcapacity\delta
        }  
\right)} \right) \overline{\profit}.
\end{eqnarray*}
In particular, for~$|\coaches| = 20$, $\reservedcapacity = 100$, and~$\delta = 0.06$, the maximum factor is~$0.916$, achieved when~$\theta \approx 0.9218$. 
\end{corollary}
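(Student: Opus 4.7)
The plan is to mirror the proof of Theorem~\ref{thm: fluid optimality}, but with two crucial modifications tailored to the edge case $\delta = \frac{1}{\reservedcapacity}$. First, since each incoming request is offered with probability $\theta \sum_{\coach} \frac{x_{\requesttype,\timetogo,\coach}}{\arrivalrate_\requesttype}$ rather than $\sum_{\coach} \frac{x_{\requesttype,\timetogo,\coach}}{\arrivalrate_\requesttype}$, the expected revenue collected (conditional on feasibility) picks up a multiplicative factor of $\theta$. Second, the Bernstein-type concentration argument must be re-tuned: the value $\frac{1}{\delta} - \reservedcapacity$ used in Theorem~\ref{thm: fluid optimality} vanishes at $\delta = \frac{1}{\reservedcapacity}$, so the $\theta$-scaling is precisely what creates the non-trivial slack needed to drive the tail bound.

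I would begin by defining the Bernoulli indicator $N_{\leg,\timetogo}$ of demand for leg $\leg$ in step $\timetogo$ exactly as in the theorem, but observe that with the $\theta$-scaling its expectation becomes $E[N_{\leg,\timetogo}] = \theta \sum_{\requesttype:\leg\in\setLegs(\requesttype)}\sum_{\coach} x_{\requesttype,\timetogo,\coach}$, and since $q_{\requesttype,\leg}=1$ whenever $\leg \in \setLegs(\requesttype)$ (each group is a single passenger when $\delta = 1/\reservedcapacity$), fluid feasibility gives $\sum_\timetogo E[N_{\leg,\timetogo}] \leq \theta |\coaches|\reservedcapacity = \frac{\theta|\coaches|}{\delta}$. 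A request of type $\requesttype$ is blocked on leg $\leg$ only once the number of prior acceptances through $\leg$ strictly exceeds $\frac{|\coaches|}{\delta} - 1$, so the deviation of interest in Bernstein's inequality is $t = \frac{|\coaches|}{\delta} - 1 - \frac{\theta|\coaches|}{\delta} = \frac{|\coaches|}{\delta}\bigl(1 - \frac{\delta}{|\coaches|} - \theta\reservedcapacity\delta\bigr)$, where I used $\theta = \theta\reservedcapacity\delta$.

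The core algebraic step is substituting this $t$ into the one-sided Bernstein bound $\exp\!\bigl(-\tfrac{t^2/2}{\sum_\timetogo \mathrm{Var}[N_{\leg,\timetogo}] + t/3}\bigr)$, together with $\sum_\timetogo \mathrm{Var}[N_{\leg,\timetogo}] \leq \sum_\timetogo E[N_{\leg,\timetogo}] \leq \frac{\theta|\coaches|}{\delta}$. After combining denominator terms (grouping $3\theta + 1 - \frac{\delta}{|\coaches|} - \theta\reservedcapacity\delta = 1 - \frac{\delta}{|\coaches|} + 2\theta\reservedcapacity\delta$ via $\theta = \theta\reservedcapacity\delta$) and simplifying the common factor $\frac{|\coaches|}{3\delta}$, the exponent collapses to $-\frac{3|\coaches|}{2\delta}\frac{(1 - \delta/|\coaches| - \theta\reservedcapacity\delta)^2}{1 - \delta/|\coaches| + 2\theta\reservedcapacity\delta}$, which is exactly the expression in the corollary. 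Applying a union bound over the (at most) $|\setLegs|$ legs in $\setLegs(\requesttype)$ yields the claimed lower bound on $\Pr[G_{\requesttype,\timetogo}=1]$, and multiplying by the extra $\theta$ factor (from the offering rule) and by the optimal fluid value $\overline{\profit}$ completes the revenue bound.

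Finally, the numerical claim $0.916$ at $\theta\approx 0.9218$ is a one-dimensional optimization of the stated bound in $\theta \in (0,1)$ after plugging in $|\coaches|=20$, $\reservedcapacity=100$, $\delta=0.06$, $|\setLegs|=4$; this reduces to checking the first-order condition numerically. The main obstacle I anticipate is the bookkeeping in the Bernstein algebra: I must be careful to (i) use the strict threshold $\frac{|\coaches|}{\delta} - 1$ rather than $\frac{|\coaches|}{\delta}$ to obtain the $\frac{\delta}{|\coaches|}$ correction, and (ii) exploit the identity $\theta = \theta\reservedcapacity\delta$ consistently when $\delta = 1/\reservedcapacity$, so that every $\theta$ appearing in the numerator of the exponent is converted into the dimensionally correct $\theta\reservedcapacity\delta$ form that appears in the stated bound.
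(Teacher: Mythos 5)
Your proposal is correct and follows essentially the same route as the paper's appendix proof: introduce the $\theta$-scaled offering rule, propagate the factor $\theta$ through the expected-revenue expression, bound $\sum_{\timetogo}\mathrm{Var}[N_{\leg,\timetogo}]\leq\theta|\coaches|\reservedcapacity$, apply the one-sided Bernstein inequality, union-bound over the legs, and optimize numerically in $\theta$. In fact your use of the strict threshold $\frac{|\coaches|}{\delta}-1$ is what produces the $\frac{\delta}{|\coaches|}$ correction terms appearing in the stated bound, a detail the paper's own appendix computation elides (its displayed exponent uses the threshold $\frac{|\coaches|}{\delta}$ and hence lacks those terms), so your algebra actually matches the corollary's statement more faithfully.
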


\subsubsection{Fixed Assignment Policy}

With information about the arrival rates, we can compute the probabilities with which the~$j^{th}$ request of type~$\requesttype$ will arrive within the selling horizon. With that, we can  create \textit{pre-assignment plans}, in which seats are assigned to requests a priori, before the beginning of the selling horizon. We present an algorithm that computes pre-assignment plans using the arrival probabilities of each request to scale the revenue collected from each assignment. Let~\emphmath{X_\requesttype} be the random variable denoting the number of requests of type~$\requesttype$ that arrive within the selling horizon. The support of each~$X_\requesttype$ is~$\mathbb{N}^+$, so there is no upper limit on the number of requests of any type. Therefore, for computational tractability, we restrict the set of requests to~\notation{\aprioriassignments_\delta}{ \{(\requesttype,j) \in \requesttypes \times \mathbb{N}:  Pr[X_{\requesttype} \geq j] \geq \delta\}}, i.e., we only consider requests with arrival probabilities larger than some given~$\delta > 0$. We also use the arrival probabilities to scale the revenue collected from the assignments. This approach consists of solving~\ref{model:primal}, the MIP formulation presented below:
\begin{align}\label{model:primal}
\tag{\textbf{A-Priori}($\delta$)}
    \max\  &\sum\limits_{(\requesttype,j,\coach) \in \aprioriassignments_\delta \times \coaches} Pr[X_{\requesttype} \geq j] \cdot \price(\requesttype) \cdot  x_{\requesttype,j,\coach} &  \\
&
    \sum\limits_{\coach \in \coaches}  x_{\requesttype,j,\coach} \leq 1, 
     & 
    \forall \request = (\requesttype,j)  \in \aprioriassignments_\delta
    \nonumber
\\
&
    \sum\limits_{\substack{(\requesttype,j,\coach) \in \aprioriassignments_\delta \times \coaches \\ 
    \leg \in \setLegs(\requesttype)}} \numPass(\requesttype) \cdot x_{\requesttype,j,\coach} \leq \reservedcapacity,
     & 
    \forall (\leg,\coach) \in  \setLegs \times \coaches 
    \nonumber    
\\    
& 
    \sum\limits_{\coach \in \coaches}  x_{\requesttype,j+1,\coach} \leq
\sum\limits_{\coach \in \coaches}  x_{\requesttype,j,\coach}, 
    &
    \forall (\requesttype,j) \in \aprioriassignments_\delta 
    \label{eq: early requests first}
    \\
& 
    x_{\requesttype,j,\coach} \in \{0,1\}, 
    &
    \forall (\requesttype,j,\coach) \in \aprioriassignments_\delta \times \coaches \nonumber
\end{align}
The decision variables~$x_{\requesttype,j,\coach}$ and the first two families of inequalities are identical to their counterparts in~\ref{model:offline}. Inequalities~\eqref{eq: early requests first} force~\ref{model:primal} to assign the~$j$-th request of type~$\requesttype$ only if all the previous requests of the same type have been assigned as well; inequalities~\eqref{eq: early requests first} are valid because all requests of the same type yield the same revenue and~$Pr[X_{\requesttype} \geq j] \geq 
Pr[X_{\requesttype} \geq j+1]$. We use~\emphmath{\mathcal{A}_\delta} and~\emphmath{\requests_\delta} to denote the set of  assignments and serviced requests associated with a pre-assignment plan identified by~\ref{model:primal}.

The main drawback of adopting a fixed assignment policy is the uncertainty associated with the arrivals; some unlikely yet valuable requests may  appear, whereas requests in~$\requests_\delta$ may never arrive. Moreover, one can construct pathological scenarios for which this policy is not~$c$-competitive for any constant~$c$. To see this, note that a sufficiently large number of request types with low arrival rates (and, consequently, with arrival probability inferior to~$\delta$) and extremely high prices would always compose an optimal solution while being discarded by the policy. On the positive side, such worst-case scenarios are not observed in practice, and the fixed assignment allows the railway company to increase the integration of the assignment decisions. 

\section{\Problem with Fairness Constraints}

This section investigates variants of~\Problem with fairness constraints. For~\Problem-FCFS we present an exact MIP formulation for the offline case and propose an algorithm for the online case with arrival rates. We conclude with a technical discussion  of~\Problem-SFCFS.

\subsection{The Offline~\Problem-FCFS}\label{sec:The offline CRP-FCFS}

In the offline~\Problem-FCFS, the entire arrival sequence~$\requests$ is known. We focus on structural properties with direct algorithmic implications, i.e., our goal is to develop efficient mathematical programming algorithms by extending formulation~\ref{model:offline} to enforce fairness.

\subsubsection{Forbidding Unfair Rejections} \label{section: minimum fairness conditions}

According to the FCFS constraint, an incoming request~$\request_\timetogo$ cannot be rejected if the current residual seating capacity~$\veccapacity_\timetogo$ is large enough to accommodate~$\request_\timetogo$ in at least one coach, i.e., if $\coaches_{\veccapacity_\timetogo,\request_\timetogo} \neq \emptyset$.
We model this condition by employing binary \textit{fairness} variables~\emphmath{z_{\request_\timetogo,\leg, \coach}}, which indicate that request~$\request_\timetogo$ cannot be assigned to coach~$\coach$ due to lack of residual capacity (i.e., fewer than~$\numPass(\request_\timetogo)$ available seats) in leg~$\leg \in \setLegs(\request_\timetogo)$. For every request~$\request_\timetogo$ and coach~$\coach$, a fairness variable~$z_{\request_\timetogo,\leg, \coach}$ must be activated if and only if~$\request_\timetogo$ is rejected, i.e., if $y_{\request_\timetogo} = 0$ in formulation~\ref{model:offline}. This condition is modeled as follows:
\begin{eqnarray}\label{eq:Unfair Activation}
y_{\request_\timetogo} + \sum_{\leg \in \setLegs(\request_\timetogo)} z_{\request_\timetogo,\leg,\coach} = 1 ,  \forall \request_\timetogo \in \requests, \coach \in \coaches.
\end{eqnarray}
Constraint~\eqref{eq:Unfair Activation} activates at most one fairness variable per coach and request. Note that two or more legs may have insufficient capacity to accommodate an incoming request, but our model just needs to activate a single fairness variable per coach to enforce FCFS. 

We use each fairness variable~$z_{\request_\timetogo,\leg,\coach}$ to impose a occupancy levels that would justify rejections under FCFS. Namely, for each leg~$\leg$ coach~$\coach$, and request~$\request_{\timetogo}$ we have
\begin{eqnarray}\label{eq:Unfair Rejections}
\centering
        \sum_{\request' \in \requests_{\timetogo-1}: 
        \leg \in \setLegs(\request') 
        } 
            \numPass(\request') x_{\request',\coach} 
    \geq  
        (\reservedcapacity -   \numPass(\request_\timetogo)+1)z_{\request_\timetogo,\leg,\coach}.
\end{eqnarray}
If~$z_{\request_\timetogo,\leg,\coach}$ is activated, inequality~\eqref{eq:Unfair Rejections} imposes the assignment of a set of requests in~$\requests_{\timetogo-1}$ to coach~$\coach$ that occupy at least~$\reservedcapacity -   \numPass(\request_\timetogo)+1$ seats in leg~$\leg$; under these circumstances, $\request_\timetogo$ cannot be assigned to~$\coach$.

The incorporation of the families of constraints~\eqref{eq:Unfair Activation} and~\eqref{eq:Unfair Rejections} and fairness variables~$\boldsymbol{z}$ into~\ref{model:offline} yields formulation~\ref{model:deterministic}, which solves~\Problem-FCFS (see Section~\ref{sec:Deterministic Formulation} in the electronic companion). Unfortunately, preliminary experiments show that~\ref{model:deterministic} is computationally challenging. Therefore, we investigate other techniques and families of cuts to enhance the computational performance of our algorithm.

\subsubsection{Forward Filtering}\label{sec: forward filtering}
The family of inequalities~\eqref{eq:Unfair Rejections} forces the assignment of requests preceding some rejected request~$\request_\timetogo$. A complementary idea consists of using the fairness variables to limit the number of seats for legs with exhausted capacity in~$\setLegs(\request_\timetogo)$ assigned \textit{after} the rejection of~$\request_\timetogo$ to~$\numPass(\request_\timetogo) - 1$, i.e., if~$\request_\timetogo$ is rejected, we cannot assign the seats it would occupy to future requests. This condition is modeled as follows:
\begin{eqnarray}\label{eq:Unfair Rejections B}
\centering
        \sum_{\request' \in \requests \setminus \requests_{\timetogo}: 
        \leg \in \setLegs(\request'), 
        } \numPass(\request') x_{\request',\coach} 
    \leq 
        (1 - z_{\request_\timetogo,\leg,\coach}) \reservedcapacity  + 
        (\numPass(\request) - 1)z_{\request_\timetogo,\leg,\coach}.
\end{eqnarray}

We explore a similar idea to define the concept of  \textit{dominating requests}. Given a pair of requests~$\request$ and~$\request'$, we say that~\textit{$\request'$ dominates~$\request$} if~$\request <\request'$ ($\request'$ arrives after~$\request$), $\setLegs(\request) \subseteq \setLegs(\request')$ (the itinerary of~$\request'$ contains all legs in~$\setLegs(\request)$), and~$\numPass(\request) \leq \numPass(\request')$ ($\request'$ does not have fewer people than~$\request$); we use~$\request' \succ \request$ (or~$\request \prec \request'$) to indicate that~$\request'$ dominates~$\request$. Let~\notation{\requests_{\request}^{\succ}}{ \{ \request' \in \requests: \request \succ \request' \}  } be the set of requests dominated by~$\request$. Remark~\ref{prop:Dominating requests} shows the connection between dominance and FCFS.

\begin{remark}\label{prop:Dominating requests} In feasible solutions to~\Problem-FCFS, the assignment of~$\request$ implies the assignment of~$\requests_{\request}^{\succ}$, i.e., 
\begin{eqnarray}\label{eq:Unfair Assignments}
\centering
\sum_{\request' \in \requests_{\request}^{\succ}} y_{\request'} \geq |\requests_{\request}^{\succ}|y_{\request}, \forall \request \in \requests.
\end{eqnarray}
\end{remark}
Inequalities~\eqref{eq:Unfair Rejections B} and~\eqref{eq:Unfair Assignments} are insufficient to enforce the FCFS constraint. In particular, they do not capture scenarios where a request associated with a large group of people is rejected, whereas requests arriving later from smaller groups demanding the same resources are assigned; Example~\ref{example: incomplete constraint} illustrates this situation. 
\begin{example}
\label{example: incomplete constraint} Let~$\request$ and~$\request'$
be requests such that~$\numPass(\request) = 4$, $\numPass(\request') = 3$, $\setLegs(\request) = \setLegs(\request')$, $\request < \request'$, and~$z_{\request,\leg,\coach} = 1$ for some~$\leg$ in~$\setLegs(\request)$.  Inequalities~\eqref{eq:Unfair Rejections B} and~\eqref{eq:Unfair Assignments} do not preclude 
the rejection of~$\request$ followed by the assignment of~$\request'$ in scenarios where a coach with residual capacity at least~$\numPass(\request)$ was available upon the arrival of~$\request$.
\end{example}

\subsection{The Online \Problem-FCFS}

\newcommand{\block}{\ensuremath{b}}
\newcommand{\blocks}{\ensuremath{B}}

Algorithm~\ref{alg:crp-fcfs} describes the~\texttt{FCFS Policy}, a procedure tailored for the online~\Problem-FCFS based on~\ref{model:primal}. \Problem-FCFS does not admit unilateral rejections, so the policy solves~\ref{model:primal} periodically and adjusts the pre-assignment plan in real-time to accommodate incoming requests that have not been pre-assigned but can nevertheless be serviced.  

\begin{algorithm}[H]
\begin{algorithmic}[1]
\footnotesize
\State set~$\vecx = \boldsymbol{0}, \veccapacity = \veccapacity_1$;
\For{each $\block$ in $\blocks$}
\State Obtain~$\mathcal{A}_\delta$ and~$\requests_\delta$ by solving~\ref{model:primal} with the arrival probabilities of block~$\block$ 
\For{each $\timetogo$ in $\timehorizon_\block$}
\If{$\request_\timetogo \in \requests_\delta$}
\State set $\vecx_{\request_\timetogo,\coach} = 1$ and $\veccapacity = \veccapacity - \seatdemand^{\coach,  \request_{\timetogo}}$ for~$(\request_\timetogo,\coach) \in \mathcal{A}_\delta$
\ElsIf{$\coaches_{\veccapacity,\request_\timetogo} \neq \emptyset$}
\State Obtain~$\mathcal{A}_\delta$ and~$\requests_\delta$ such that~$\request_\timetogo \in \requests_\delta$ by solving~\ref{model:primal} with the arrival probabilities of~$\block$ 
\EndIf{}    
\EndFor{}
\EndFor{}
\end{algorithmic}
\caption{\texttt{FCFS Policy}}
\label{alg:crp-fcfs} 
\end{algorithm}

Algorithm~\ref{alg:crp-fcfs}  uses the pre-assignment plan for its assignment decisions. If a pre-assigned request arrives, \texttt{FCFS Policy} adopts the choice of the current plan. Otherwise, it solves~\ref{model:primal} again with the additional constraint that the current request must be assigned if the residual capacity is large enough to accommodate it; otherwise, the request is rejected. 
The arrival probabilities are constantly changing, but 
formulation~\ref{model:primal} is computationally challenging for real-time decision-making for real instances. Therefore, 
Algorithm~\ref{alg:crp-fcfs} partitions the set of time steps into~$\block$ \textit{blocks}~$\timehorizon_1, \timehorizon_2,\ldots,\timehorizon_\block$ (e.g., each representing a day).
The arrival probabilities are updated only in the first time step of each block (e.g., at the beginning of the day), so the instances of~\ref{model:primal} solved within a block are relatively similar. 




\subsection{\Problem with Strict FCFS Constraints and Delayed Assignments}\label{sec:temporal fairness}

\Problem-SFCFS is a variant of the problem that provides more assignment flexibility in exchange for a stricter fairness condition. In~\Problem-SFCFS, the company only needs to maintain a set~\emphmath{\overline{\requests}_{\timetogo}} $\subseteq \requests_\timetogo$ of \textit{accepted} requests in each type step~$\timetogo$, rather than a set of fixed and irrevocable assignments. The final assignment of accepted requests to coaches must only happen after the last arrival (or, equivalently, shortly before the train's departure). In exchange for the flexibility gained from delayed assignments, the railway company must accept an incoming request~$\request_\timetogo$ if there exists a feasible solution assigning~$\request_\timetogo$ and all requests in~$\requests_{\timetogo-1}$.

From a computational standpoint, \Problem-SFCFS reduces to solving a feasibility problem for each incoming request. More precisely, given a request~$\request_\timetogo$ and a set~$\overline{\requests}_{\timetogo-1}$ of accepted requests, we must decide whether there exists a feasible assignment of all requests in~$\overline{\requests}_{\timetogo-1} \cup\{ \request_\timetogo\}$. Algorithm~\ref{alg:crp-sfcfs} presents the implementation of this idea, to which we refer as the~\texttt{SFCFS Policy}; we abuse notation slightly and use~$\Problem(\overline{\requests}_0 \cup \{ \request_\timetogo\})$ to denote the optimal objective value to the associated instance.

\begin{algorithm}[H]
\begin{algorithmic}[1]
\footnotesize
\State Set~$\overline{\requests}_0 = \emptyset$
\For{every time step~$\timetogo$}
\State $\overline{\requests}_{\timetogo} = \overline{\requests}_{\timetogo-1}$
\If{$\Problem(\overline{\requests}_0 \cup \{ \request_\timetogo\}) = \sum_{\request \in \overline{\requests}_0 \cup \{ \request_\timetogo\}}\price(\request) $}
\State $\overline{\requests}_{\timetogo} = \overline{\requests}_{\timetogo-1} \cup \{ \request_\timetogo \}$
\EndIf{}
\EndFor{}
\end{algorithmic}
\caption{\texttt{SFCFS Policy}}
\label{alg:crp-sfcfs} 
\end{algorithm}

This verification can be cast as the decision version of~\Problem where the goal is to decide whether all requests can be packed; recall that, in terms of computational complexity, the optimization version of this problem does not admit a~$(d^{1 - \epsilon})$-approximation for any~$\epsilon > 0$, so effective polynomial-time algorithms are unlikely to exist. Nevertheless, we show in our numerical studies that~$\Problem(\overline{\requests})$ can be solved efficiently. 



\section{Numerical Study}\label{sec:experiments}

We report the results of our numerical study involving the algorithms and policies presented in this work. We implement all algorithms in Python 3.10.4 and use Gurobi 11.0.2 to solve the mixed-integer linear programming formulations~\citep{gurobi}. The experiments are executed on an Apple  M1 Pro with 32 GB of RAM. 

We generate instances of~\Problem using parameters of the problem describing arrival rates, seating capacities, and ticket prices observed in the  Tokyo-Shin-Osaka line (see Figure~\ref{fig:shinkansen_map}).
The seating capacity is presented in Table~\ref{Table with seating capacities} (extracted from \url{https://en.wikipedia.org/wiki/N700S_Series_Shinkansen}); in situations where the the order of the coaches matter, we always follow the order presented in this table (i.e., 65 seats in the first coach, 100 in the second, etc.).
\renewcommand{\arraystretch}{1.2}
\begin{table}[!ht]
\centering
\begin{tabular}{c|llllllllllllllll}
\textbf{Car Number} & 1 & 2 & 3 & 4 & 5 & 6 & 7 & 8 & 9 & 10 & 11 & 12 & 13 & 14 & 15 & 16 \\
\hline
\textbf{Seating capacity} & 65 & 100 & 85 & 100 & 90 & 100 & 75 & 68 & 64 & 68 & 63 & 100 & 90 & 100 & 80 & 75 \\
\end{tabular}
\caption{Seating capacity of the N700S series Shinkansen train, used in the Nomozi trains. }
\label{Table with seating capacities}
\end{table}
Prices and arrival rates are presented in Table~\ref{Table with prices} (extracted from~\url{https://global.jr-central.co.jp/en/info/fare/_pdf/nozomi.pdf}) and~\citet{manchiraju2022dynamic}. 
\begin{table}[!ht]
\centering
\begin{tabular}{*{9}{c|}} 
    & \multicolumn{2}{c}{\textbf{Tokyo}}                        \\
    \cline{1-3}
\textbf{Shin-Yokohama} & JP\textyen3,010 & 87        & \multicolumn{2}{c}{\textbf{Shin-Yokohama}}        \\
\cline{1-5}
\textbf{Nagoya}        & JP\textyen11,300 & 677      & JP\textyen10,640  & 125          & \multicolumn{2}{c}{\textbf{Nagoya}}        \\
\cline{1-7}
\textbf{Kyoto}         & JP\textyen14,170 & 390     & JP\textyen13,500   & 110         & JP\textyen5,910 & 77  & \multicolumn{2}{c}{\textbf{Kyoto}} \\
\cline{1-9}
\textbf{Shin-Osaka}    & JP\textyen14,720 & 846     & JP\textyen14,390   & 175         & JP\textyen6,680 & 232  & JP\textyen3,080 & 61\\
\hline
\end{tabular}
\caption{List of prices and average demand for reserved tickets in the Tokaido Shinkansen line for Nozomi trains; columns represent the departure locations, and rows indicate the destinations. }
\label{Table with prices}
\end{table}
We generate 50 instances and use a sales horizon of 30 days.

\subsection{The Offline~\Problem-FCFS}\label{sec: The Offline CRP-FCFS}

We study the performance of three algorithms for the offline~\Problem-FCFS, the version of the problem where all the requests are known and the FCFS fairness condition must be observed, i.e., request~$\request_{\timetogo}$ can only be rejected if the residual capacity in each coach is insufficient given the assignments of all requests in~$\requests_{\timetogo-1}$: 

\begin{itemize}
    \item {\texttt{FirstFit}:} Each request is assigned to the first coach where it fits, following the order in Table~\ref{Table with seating capacities}.
    
    \item {\texttt{RandomFit}:} Each request~$\request$ is assigned to a randomly chosen coach with sufficient capacity.
    
    \item {\texttt{Branch\&Cut}:} Preliminary experiments showed that Gurobi could not identify a feasible solution to~\ref{model:deterministic} even after 30 minutes. Therefore, we designed \texttt{Branch\&Cut}, 
    which solves the offline~\Problem-FCFS using~\ref{model:deterministic} as base but
    incorporates the fairness-enforcing constraints~\eqref{eq:Unfair Rejections} in real-time as separation cuts instead of adding them in the master problem. Moreover, to enhance computational performance, we also add to the master problem forward filtering inequalities (see~\S\ref{sec: forward filtering}). Preliminary experiments suggested that inequalities~\eqref{eq:Unfair Rejections B} and~\eqref{eq:Unfair Assignments} are computationally expensive, so we use the following family of inequalities to enforce precedence across requests of the same type, which are special cases of~\eqref{eq:Unfair Assignments}:
\begin{eqnarray*}
y_{\requesttype,k} \geq y_{\requesttype,k+1}, \forall \requesttype \in \requesttypes, k \in \mathbb{N} 
\end{eqnarray*}
The fairness-enforcing constraints are separated in polynomial time upon the identification of feasible (integer) solutions. Namely, given a feasible solution~$(\boldsymbol{x},\boldsymbol{y},\boldsymbol{z}$), \texttt{Branch\&Cut} identifies the earliest request~$\request_\timetogo$ (i.e.,  with smallest~$\timetogo$) that has been unfairly rejected. Given~$\request_\timetogo$ and the first coach~$\coach$ (as defined in Table~\ref{Table with seating capacities}) to which it could have been assigned, \texttt{Branch\&Cut} incorporates all inequalities of type~\eqref{eq:Unfair Rejections} associated with~$\request_\timetogo$ and~$\coach$ to separate~$(\boldsymbol{x},\boldsymbol{y},\boldsymbol{z})$. There are~$O(|\requests||\setLegs||\coaches|)$ constraints of type~\eqref{eq:Unfair Rejections}, so the reduction in size of the master problem compensates for the computational overhead of identifying separation cuts in real-time.  Moreover, after adding the cuts for~$\request_{\timetogo}$, we use \texttt{RandomFit} to extend the assignment plan in~$(\boldsymbol{x},\boldsymbol{y}, \boldsymbol{z})$ involving the first~$\timetogo-1$ arrivals (i.e., the subset of requests that were fairly assigned) by assigning (or rejecting) requests~$\request_{k}, \request_{k+1},\ldots,\request_{\numrequests}$, thus obtaining a feasible solution to~\Problem-FCFS.

We also use pre-processing techniques to enhance the computational performance of~\texttt{Branch\&Cut}. One consists of using \texttt{RandomFit} to identify a warm-start solution to the problem. The other technique consists of setting~$y_{\request_\timetogo}$ to 1, thus forcing the assignment of~$\request_\timetogo$,  for each request~$\request_\timetogo$ such that
$$\sum_{\request \in \requests_{\timetogo-1}: \leg \in \setLegs(\request)}\numPass(\request) \leq (1-\delta)|\coaches|\reservedcapacity, \forall \leg \in \setLegs(\request_\timetogo).$$
Remark~\ref{remark:blocking} shows that these assignments do not eliminate feasible solutions, as a train must have sufficient residual capacity to accommodate these requests regardless of the assignment decisions.  
\medskip

\end{itemize}


Our experiments show that~\texttt{Branch\&Cut} is computationally challenging but allows for the identification of high-quality solutions. Namely, when restricted to a time limit of 30 minutes, \texttt{Branch\&Cut}  attains average optimality gaps of 0.23\% and worst-case gap of 0.86\%. 
The average resource utilization is 93.56\%, with a standard deviation 0.011. These utilization levels are expected, as except during peak seasons, Shinkansen trains are typically sold out for certain itineraries, but not all.

The plots in Figure~\ref{fig:CRPFCFS_offline} show the relative revenue level attained by \texttt{FirstFit} and \texttt{RandomFit} in comparison with the revenue of the best primal solution identified by~\texttt{Branch\&Cut}. Figure~\ref{fig:CRPFCFS_profit} is a cumulative performance plot where each point~$(x,y)$ indicates that the respective algorithm attained at least~$y$\% of the revenue collected by~\texttt{Branch\&Cut} for at least $x$\% of the instances. 
Figure~\ref{fig:CRPFCFS_profit_BA} presents a Bland-Altman plot reporting the differences in performance (also measured in terms of the relative revenue) of \texttt{RandomFit} and \texttt{FirstFit}. Each point represents an instance, with the coordinates indicating the mean relative revenue attained by the algorithms (in the $x$-axis) and the difference between their outcomes ($y$-axis). The plot also presents the limits of agreement with lines delimiting the points deviating by 1.96 standard deviations from the mean.
\begin{figure}[h!]
\centering
\subfloat[\centering  Relative revenue of \texttt{RandomFit} and \texttt{FirstFit}  \label{fig:CRPFCFS_profit}]{%
  \includegraphics[scale=0.495]{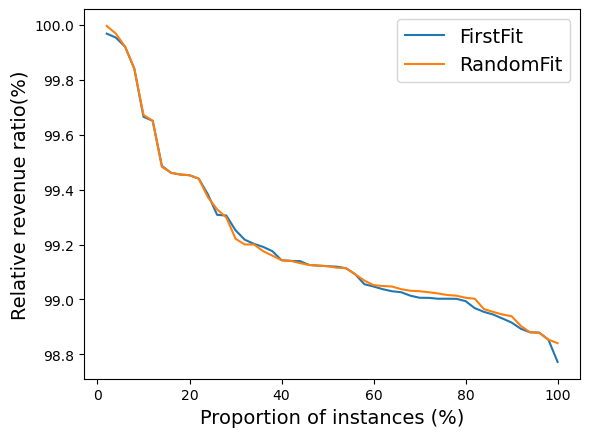}%
}
\subfloat[\centering Comparison between \texttt{RandomFit} and \texttt{FirstFit} \label{fig:CRPFCFS_profit_BA}]{%
  \includegraphics[scale=0.4]{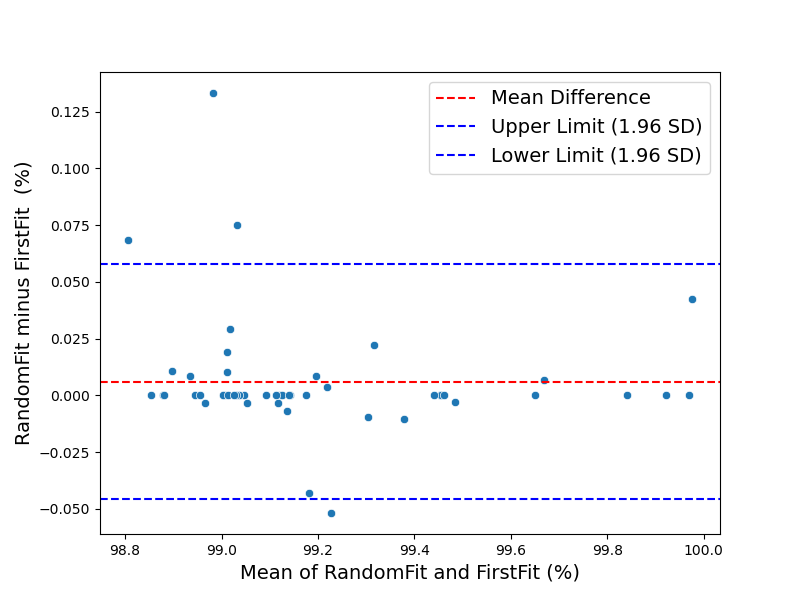}%
}
\caption{Performance profiles of  heuristic policies for \Problem-FCFS.}
\label{fig:CRPFCFS_offline}
\end{figure}

The results show that both heuristics are effective in practice, with losses inferior to 1\% in comparison with \texttt{Branch\& Cut}. Moreover, \texttt{RandomFit} slight outperforms \texttt{FirstFit} (see Figure~\ref{fig:CRPFCFS_profit_BA}). From a practicality standpoint,  \texttt{RandomFit} presents the best trade-off in terms of implementation complexity (the algorithm is fully combinatorial and does not rely on sophisticated mathematical programming techniques), computational complexity (its execution time is negligible), and performance. Finally, \texttt{RandomFit} is inherently online and adherent to the FCFS fairness condition in that it makes irrevocable reject-or-assign decisions sequentially and only rejects requests when it does not have sufficient residual capacity. Therefore, the price of FCFS fairness on the revenue of railway services is negligible. 

\subsection{The Online~\Problem}

We evaluate the empirical performance of 
four algorithms 
for the online~\Problem, which does not consider fairness conditions.
We also consider \texttt{RandomFit}, as it is also suitable for the problem, and  
an adaptation of the policy derived from our study of the random order model 
that considers additional structural information about the instances used in our study. We use as baseline \texttt{Exact}, which refers to the optimal offline solution of the instances.

\begin{itemize}
    
    \item {\texttt{Fixed}:} Our implementation of~\texttt{Fixed} solves~\ref{model:primal} before the first time step, using the arrival rates to define the probabilities, and adopts the optimal a priori assignment plan for the entire selling horizon. 
    We use~$\delta = 0.001$, i.e., \ref{model:primal} considers only requests arriving with probability superior to 0.1\%. 

    \item  {\texttt{ROM}:} For our implementation of~$\texttt{NR}(q)$, we adopt the optimal parameterization presented in Theorem~\ref{thm:randomorder}, with $q = \frac{1}{2-\delta} = 0.515$. Algorithm~$\texttt{NR}(q)$ requires  knowledge about the number~$\numrequests$ of arrivals, which is unknown in practice. Therefore, we use the expected number of requests when defining the length of the sampling phase, i.e., \texttt{ROM} considers the first~$q\sum\limits_{\requesttype \in \requesttypes}\arrivalrate_{\requesttype}$ arrivals as part of the sampling phase. The knapsack phase contains all the other arrivals, so its length is instance-dependent.

  \item {\texttt{AdaptiveROM}:} 
  \texttt{ROM} is oblivious to the fact that several requests may be of the same type, and the implications in our setting are significant because some assignments may become unnecessarily blocked. For example, we may observe the following undesirable combination of events when \texttt{ROM} processes some request~$\request_{\timetogo} = (\requesttype,j)$ during the knapsack phase: a) $\request_{\timetogo}$ is not assigned in~$\Problem(\requests_\timetogo)_{LR}$ (i.e., $\sum_{\coach \in \coaches}x^{(\timetogo)}_{\requesttype, j,\coach} = 0$), and, consequently, gets rejected by~\texttt{ROM}); b) all requests~$(\requesttype,j')$ such that~$j' < j$ (i.e., requests of same type which arrived before~$\request_{\timetogo}$) are assigned by~$\Problem(\requests_\timetogo)_{LR}$; and c) \texttt{ROM} did not assign any requests of type~$\requesttype$ after observing the first~$\timetogo$ request. With that, we have a situation where \texttt{ROM} cannot assign a request for technicalities. This shortcoming of~\texttt{ROM} is mitigated in~\texttt{AdaptiveROM}, an adaptation of~\texttt{ROM} that explores the fact that many requests of the same type may arrive to ``recover'' lost sales opportunities. Namely, after solving~$\Problem(\requests_\timetogo)_{LR}$, we compare the random value~$z$ (see line 5 of Algorithm~\ref{alg:NR}) against~$\left(\sum_{k \in \mathbb{N}}y_{\requesttype,k}\right) - j'$ when deciding on the assignment of~$\request_{\timetogo} = (\requesttype,j)$, where~$j'$ is the number of requests of type~$\requesttype$ assigned by~$\texttt{NR}(q)$ within the first~$\timetogo-1$ time steps. 

\item {\texttt{Fluid}:} A request of type~$\requesttype$ arriving at time step~$\timetogo$ is assigned with probability $\frac{x_{\requesttype,\timetogo,\coach}}{\sum_{\coach \in \coaches_{\reservedcapacity_\timetogo,\request_\timetogo}}x_{\requesttype,\timetogo,\coach}}$ to the first coach where it fits, where~\textbf{x} is the solution to~\texttt{Fluid} with~$\lambda = 0.001$. 
\medskip
\end{itemize}


Figure~\ref{fig:CRP_results} reports the relative revenue and utilization of \texttt{Fixed}, \texttt{ROM}, \texttt{AdaptiveROM}, \texttt{Fluid}, and \texttt{RandomFit} in comparison with \texttt{Exact}.
Figure~\ref{fig:CRP_profit} reports the relative revenue
(in the $y$-axis) over the selling horizon (in the $x$-axis). We aggregate the results by policy, with each curve indicating the average values and the region within a standard deviation. 
Note that points~$(x,y)$ where $y > 100$  indicate situations where a policy attains higher revenue than \texttt{Exact}  within the first~$x$ days. The plot on the right (\ref{fig:CRP_utilization}) reports the percentage 
 of the occupied (seating) capacity (in the~$y$-axis) over the selling horizon (in the~$x$-axis).
\begin{figure}[ht!]
\centering
\subfloat[\centering  Sales over time \label{fig:CRP_profit}]{%
  \includegraphics[scale=0.475]{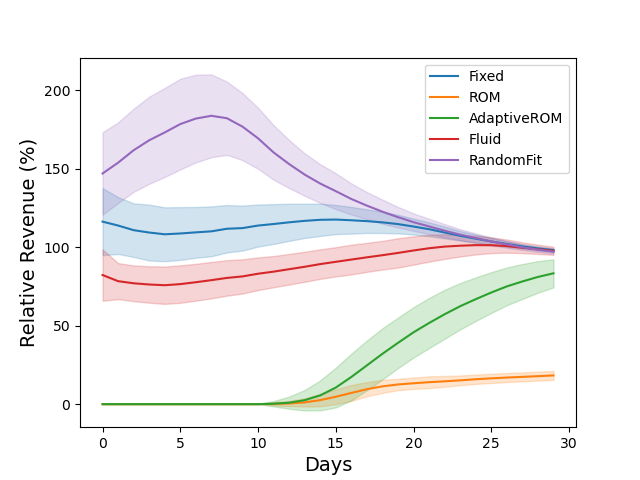}%
}
\subfloat[\centering Utilization over time \label{fig:CRP_utilization}]{%
  \includegraphics[scale=0.475]{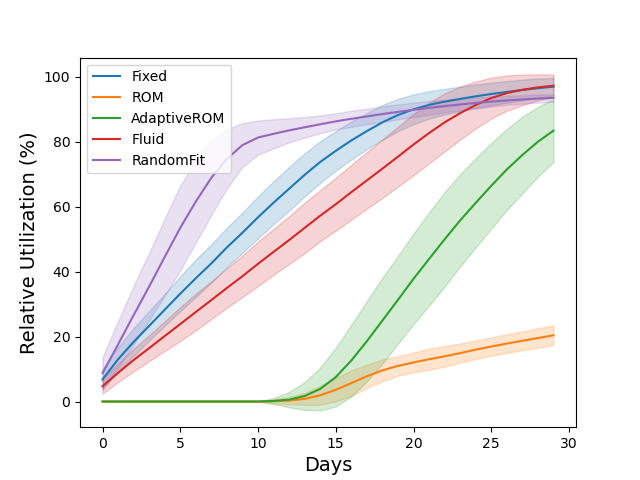}%
}
\caption{Performance profiles for \Problem policies over the selling horizon.}
\label{fig:CRP_results}
\end{figure}

The results show that~\texttt{Fixed}, \texttt{Fluid}, and \texttt{RandomFit} have solid performance and consistently attain quasi-optimal solutions by the end of the planning horizon. On average, the overall performance of~\texttt{Fixed} is slightly better, with an average empirical ratio of 98.15\%, standard deviation of 1.18, and worst-case ratio of 92.32\%. \texttt{Fluid} attains 97.67\%, 2.57, and 82.17\% for the same indicators. Interestingly, \texttt{RandomFit} is slightly inferior but more stable, with an average of 
97.13\%, a standard deviation of 1.26, and a worst-case ratio of 94.21\%. \texttt{RandomFit} has a clear advantage in the 
first half of the selling horizon and consistently outperforms \texttt{Exact} in the period, whereas \texttt{Fixed}'s revenue curve is slightly above that of \texttt{Exact} and \texttt{Fluid} is slightly below. These observations are important in practice and motivate the adoption of \texttt{RandomFit} and \texttt{Fixed} because early sales are reflected in important aspects of railway management operations, such as increased cash flow and lower marketing expenditure. 

The results also show that the sampling phase significantly impacts the policies derived from our study of the online~\Problem in the random order model.  Recall that we use~$q = 0.515$, so more than 50\% of the requests are rejected in expectation, as shown in Figure~\ref{fig:CRP_results}.
The results suggest that the arrival rates and the selling horizon make the remaining selling days insufficient for both algorithms to attain the performance of~\texttt{Fixed} and~\texttt{Fluid}. \texttt{ROM} is worse than all other policies, with an average empirical ratio of only 18.29\%. This result is slightly worse than the asymptotic rate of 29\% derived from our theoretical analysis, and the difference can be explained by relatively small number of requests in the knapsack phase. For example, tests involving similar instances but with a selling horizon of 60 days attain an average empirical rate of 25.37\%. In contrast, \texttt{Adaptive-ROM} has much better performance, with an average empirical performance of 83.32\%. We note that~\texttt{Adaptive-ROM} has a similar performance to \texttt{Fixed} once the knapsack phase starts in that both algorithms assign approximately 85\% of the seating capacity within 20 days. Interestingly, a 60-day selling horizon increases the average empirical performance of~\texttt{Adaptive-ROM}  to 84.13\%, a minor difference with respect to the results for 30-day selling horizons. 


\subsection{Online~\Problem with Fairness Constraints}

We conclude by evaluation algorithms tailored for the online \Problem-FCFS and \Problem-SFCFS. In particular, we investigate the impact of the fairness constraints on the revenue attained by the algorithms. We measure the impact of fairness by using the results of~\texttt{Exact} as baseline. 
We also include \texttt{RandomFit} in the analysis, as it can be used to solve the online~\Problem-FCFS.

\begin{itemize}
    \item {\texttt{SFCFS}:} We use~\texttt{SFCFS} to denote our implementation of the algorithm that solves \Problem-SFCFS. 

    \item {\texttt{PreAssign}:} Similar to \texttt{Fixed}, \texttt{PreAssign} solves~\ref{model:primal} with~$\delta = 0.001$ based on the arrival rates to obtain an a priori assignment plan. 
    \texttt{PreAssign} updates the probabilities and solves~\ref{model:primal} at the beginning of each day to obtain an assignment plan that reflects current probabilities and residual capacities. Updates to the formulation made during the day to accommodate requests that have not been pre-assigned do not incorporate changes in the probabilities. 
\medskip
\end{itemize}

Our experiments shows that the computational burden of finding a feasible solution to~$\Problem(\overline{\requests}_0 \cup \{ \request_\timetogo\})$ upon each arrival~$\request_\timetogo$ is negligible. Therefore, we report the results of an exact (optimal) implementation of~\texttt{SFCFS}. \texttt{PreAssign} is computationally more challenging, but the average solution time per request is inferior to one second, so the algorithm is also adequate for practical adoption. 

Figure~\ref{fig:Fairness_results} presents Bland-Altman plots to report the relative performance of \texttt{SFCFS}, \texttt{PreAssign}, and \texttt{RandomFit}; we use again~\texttt{Exact} as reference. The comparison between \texttt{PreAssign} and  \texttt{RandomFit} in Figure~\ref{fig:onlineFCFS_profit} 
shows that \texttt{PreAssign} delivers better results for more instances, but some outliers make the average revenue attained by \texttt{RandomFit} marginally larger (by less than 0.01\%). Figure~\ref{fig:CRPFCFS_utilization} reveals that \texttt{SFCFS} and  \texttt{RandomFit} are similar, with outlier instances given some aggregate advantage for \texttt{RandomFit}. 

\begin{figure}[h!]
\centering
\subfloat[\centering  Comparison between \texttt{PreAssign} and \texttt{RandomFit} \label{fig:onlineFCFS_profit}]{%
  \includegraphics[scale=0.4025]{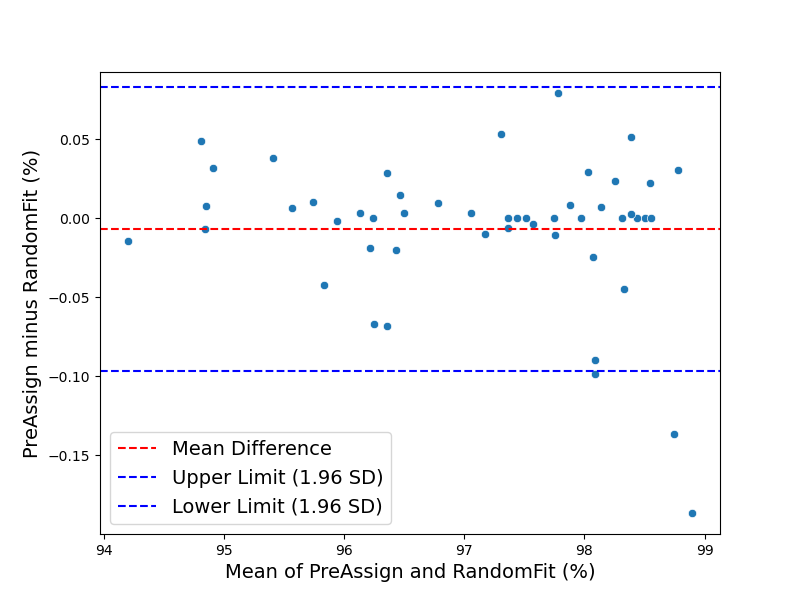}%
}
\subfloat[\centering Comparison between \texttt{SFCFS} and \texttt{RandomFit} \label{fig:CRPFCFS_utilization}]{%
  \includegraphics[scale=0.4025]{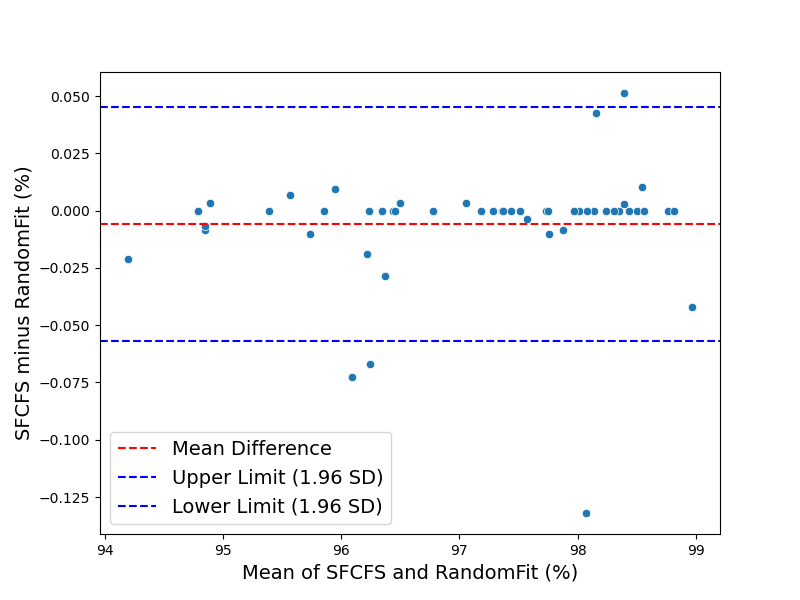}%
}
\caption{Performance of policies that observe fairness considerations.}
\label{fig:Fairness_results}
\end{figure}

Our results provide a strong case in favor of SFCFS in practical settings. The differences in revenue between \texttt{SFCFS} and \texttt{RandomFit} are inferior to~$0.1\%$, so even price adjustments may be unnecessary. For reference, 0.1\% corresponds to approximately \$150 (or JP\textyen20,000), or two train tickets on the most expensive routes (see Table~\ref{Table with prices}), so increased brand perception and customer goodwill derived from adopting SFCFS should provide a satisfactory trade-off for the railway company. Moreover, the results shows that the stricter notion of fairness is adequately balanced by higher flexibility in the assignment decisions.

\section{Conclusions and Future Work}

This work presents the first study of the online seat reservation problem for railways for group requests, which encompasses online decisions on decisions and assignment of requests to coaches. We study different settings of the problem, motivated by similar problems investigated in the literature and by real-world applications modeled by~\Problem. The offline version of~\Problem-FCFS is computationally challenging, so we design a mathematical programming algorithm to solve it exactly. Moreover, we study different online models of~\Problem and obtain algorithms with constant-factor competitive ratios, including one that extends to general packing problems that have been investigated in the literature. Our numerical experiments involving instances based on real-world scenarios show that the policies based on our algorithm are adequate in practice. Moreover, our results provide a strong support in favor of the adoption of fairness constraints in railway systems, as the impact in the revenue is minor.

There are interesting directions for future work.  From the operational standpoint, we note that, in practice, other assignment constraints may be associated with group requests. For example, in addition to enforcing the assignment of all passengers to the same coach, the railway company may also wish to restrict the distribution of passengers to seats to a subset of templates, thus ensuring that group members are seated nearby. These conditions require seat-level assignments for groups, which are analytically and computationally more challenging to study.



\bibliographystyle{plainnat}

\bibliography{refs}

\begin{thebibliography}{41}
\providecommand{\natexlab}[1]{#1}
\providecommand{\url}[1]{\texttt{#1}}
\expandafter\ifx\csname urlstyle\endcsname\relax
  \providecommand{\doi}[1]{doi: #1}\else
  \providecommand{\doi}{doi: \begingroup \urlstyle{rm}\Url}\fi

\bibitem[Ahuja and Cunha(2005)]{ahuja2005very}
Ravindra~K Ahuja and Claudio~B Cunha.
\newblock Very large-scale neighborhood search for the k-constraint multiple knapsack problem.
\newblock \emph{Journal of Heuristics}, 11:\penalty0 465--481, 2005.

\bibitem[Alfieri et~al.(2006)Alfieri, Groot, Kroon, and Schrijver]{alfieri2006efficient}
Arianna Alfieri, Rutger Groot, Leo Kroon, and Alexander Schrijver.
\newblock Efficient circulation of railway rolling stock.
\newblock \emph{Transportation Science}, 40\penalty0 (3):\penalty0 378--391, 2006.

\bibitem[Ang et~al.(2007)Ang, Cao, and Ye]{ang2007model}
James~SK Ang, Chengxuan Cao, and Heng-Qing Ye.
\newblock Model and algorithms for multi-period sea cargo mix problem.
\newblock \emph{European journal of operational research}, 180\penalty0 (3):\penalty0 1381--1393, 2007.

\bibitem[Aouad and Ma(2023)]{aouad2022nonparametric}
Ali Aouad and Will Ma.
\newblock A nonparametric framework for online stochastic matching with correlated arrivals.
\newblock In \emph{Proceedings of the 24th ACM Conference on Economics and Computation}, pages 114--114, 2023.

\bibitem[Bach et~al.(2003)Bach, Boyar, Epstein, Favrholdt, Jiang, Larsen, Lin, and Van~Stee]{bach2003tight}
Eric Bach, Joan Boyar, Leah Epstein, Lene~M Favrholdt, Tao Jiang, Kim~S Larsen, Guo-Hui Lin, and Rob Van~Stee.
\newblock Tight bounds on the competitive ratio on accommodating sequences for the seat reservation problem.
\newblock \emph{Journal of Scheduling}, 6\penalty0 (2):\penalty0 131--147, 2003.

\bibitem[Bai et~al.(2023)Bai, El~Housni, Jin, Rusmevichientong, Topaloglu, and Williamson]{bai2023fluid}
Yicheng Bai, Omar El~Housni, Billy Jin, Paat Rusmevichientong, Huseyin Topaloglu, and David~P Williamson.
\newblock Fluid approximations for revenue management under high-variance demand.
\newblock \emph{Management Science}, 69\penalty0 (7):\penalty0 4016--4026, 2023.

\bibitem[Bergman(2019)]{bergman2019exact}
David Bergman.
\newblock An exact algorithm for the quadratic multiknapsack problem with an application to event seating.
\newblock \emph{INFORMS Journal on Computing}, 31\penalty0 (3):\penalty0 477--492, 2019.

\bibitem[Bornd{\"o}rfer et~al.(2006)Bornd{\"o}rfer, Gr{\"o}tschel, Lukac, Mitusch, Schlechte, Schultz, and Tanner]{borndorfer2006auctioning}
Ralf Bornd{\"o}rfer, Martin Gr{\"o}tschel, Sascha Lukac, Kay Mitusch, Thomas Schlechte, S{\"o}ren Schultz, and Andreas Tanner.
\newblock An auctioning approach to railway slot allocation.
\newblock \emph{Competition and Regulation in Network Industries}, 1\penalty0 (2):\penalty0 163--196, 2006.

\bibitem[Bornd{\"o}rfer et~al.(2007)Bornd{\"o}rfer, Gr{\"o}tschel, and Pfetsch]{borndorfer2007column}
Ralf Bornd{\"o}rfer, Martin Gr{\"o}tschel, and Marc~E Pfetsch.
\newblock A column-generation approach to line planning in public transport.
\newblock \emph{Transportation Science}, 41\penalty0 (1):\penalty0 123--132, 2007.

\bibitem[Borodin and El-Yaniv(2005)]{borodin2005online}
Allan Borodin and Ran El-Yaniv.
\newblock \emph{Online computation and competitive analysis}.
\newblock cambridge university press, 2005.

\bibitem[Boyar and Larsen(1999)]{boyar1999seat}
Joan Boyar and Kim~S Larsen.
\newblock The seat reservation problem.
\newblock \emph{Algorithmica}, 25\penalty0 (4):\penalty0 403--417, 1999.

\bibitem[Boyar and Medvedev(2008)]{boyar2008relative}
Joan Boyar and Paul Medvedev.
\newblock The relative worst order ratio applied to seat reservation.
\newblock \emph{ACM Transactions on Algorithms (TALG)}, 4\penalty0 (4):\penalty0 1--22, 2008.

\bibitem[Boyar et~al.(2004)Boyar, Krarup, and Nielsen]{boyar2004seat}
Joan Boyar, Susan Krarup, and Morten~N Nielsen.
\newblock Seat reservation allowing seat changes.
\newblock \emph{Journal of Algorithms}, 52\penalty0 (2):\penalty0 169--192, 2004.

\bibitem[Camati et~al.(2014)Camati, Calsavara, and Lima~Jr]{camati2014solving}
Ricardo~Stegh Camati, Alcides Calsavara, and Luiz Lima~Jr.
\newblock Solving the virtual machine placement problem as a multiple multidimensional knapsack problem.
\newblock \emph{International Conference on Networks 2014}, 264, 2014.

\bibitem[Caprara et~al.(1997)Caprara, Fischetti, Toth, Vigo, and Guida]{caprara1997algorithms}
Alberto Caprara, Matteo Fischetti, Paolo Toth, Daniele Vigo, and Pier~Luigi Guida.
\newblock Algorithms for railway crew management.
\newblock \emph{Mathematical programming}, 79:\penalty0 125--141, 1997.

\bibitem[Cesa-Bianchi and Lugosi(2006)]{cesa2006prediction}
Nicolo Cesa-Bianchi and G{\'a}bor Lugosi.
\newblock \emph{Prediction, learning, and games}.
\newblock Cambridge university press, 2006.

\bibitem[Chekuri and Khanna(2004)]{chekuri2004multidimensional}
Chandra Chekuri and Sanjeev Khanna.
\newblock On multidimensional packing problems.
\newblock \emph{SIAM journal on computing}, 33\penalty0 (4):\penalty0 837--851, 2004.

\bibitem[Ciancimino et~al.(1999)Ciancimino, Inzerillo, Lucidi, and Palagi]{ciancimino1999mathematical}
A~Ciancimino, G~Inzerillo, Stefano Lucidi, and Laura Palagi.
\newblock A mathematical programming approach for the solution of the railway yield management problem.
\newblock \emph{Transportation science}, 33\penalty0 (2):\penalty0 168--181, 1999.

\bibitem[Clausen et~al.(2010)Clausen, Hjorth, Nielsen, and Pisinger]{clausen2010off}
Tommy Clausen, Allan~Nordlunde Hjorth, Morten Nielsen, and David Pisinger.
\newblock The off-line group seat reservation problem.
\newblock \emph{European journal of operational research}, 207\penalty0 (3):\penalty0 1244--1253, 2010.

\bibitem[{CWT Research}(2017)]{CWTResearch}
{CWT Research}.
\newblock Millennials like to travel in groups – and are the most security-conscious, 2017.
\newblock URL \url{https://www.mycwt.com/news/pr/cwt-research-millennials-like-to- travel-in-groups-and-are-the-most-security-conscious/}.
\newblock Accessed: August 19, 2024.

\bibitem[Deplano et~al.(2019)Deplano, Yazdani, and Nguyen]{deplano2019offline}
Igor Deplano, Danial Yazdani, and Trung~Thanh Nguyen.
\newblock The offline group seat reservation knapsack problem with profit on seats.
\newblock \emph{IEEE Access}, 7:\penalty0 152358--152367, 2019.

\bibitem[Ferguson(1989)]{ferguson1989solved}
Thomas~S Ferguson.
\newblock Who solved the secretary problem?
\newblock \emph{Statistical science}, 4\penalty0 (3):\penalty0 282--289, 1989.

\bibitem[Frederiksen and Larsen(2003)]{frederiksen2003online}
Jens~S Frederiksen and Kim~S Larsen.
\newblock Online seat reservations via offline seating arrangements.
\newblock In \emph{Algorithms and Data Structures: 8th International Workshop, WADS 2003, Ottawa, Ontario, Canada, July 30-August 1, 2003. Proceedings 8}, pages 174--185. Springer, 2003.

\bibitem[{Gurobi Optimization, LLC}(2023)]{gurobi}
{Gurobi Optimization, LLC}.
\newblock {Gurobi Optimizer Reference Manual}, 2023.
\newblock URL \url{https://www.gurobi.com}.

\bibitem[Hetrakul and Cirillo(2014)]{hetrakul2014latent}
Pratt Hetrakul and Cinzia Cirillo.
\newblock A latent class choice based model system for railway optimal pricing and seat allocation.
\newblock \emph{Transportation Research Part E: Logistics and Transportation Review}, 61:\penalty0 68--83, 2014.

\bibitem[Huisman et~al.(2005)Huisman, Kroon, Lentink, and Vromans]{huisman2005operations}
Dennis Huisman, Leo~G Kroon, Ramon~M Lentink, and Michiel~JCM Vromans.
\newblock Operations research in passenger railway transportation.
\newblock \emph{Statistica Neerlandica}, 59\penalty0 (4):\penalty0 467--497, 2005.

\bibitem[Jasin(2014)]{jasin2014reoptimization}
Stefanus Jasin.
\newblock Reoptimization and self-adjusting price control for network revenue management.
\newblock \emph{Operations Research}, 62\penalty0 (5):\penalty0 1168--1178, 2014.

\bibitem[Kesselheim et~al.(2018)Kesselheim, Radke, Tonnis, and Vocking]{kesselheim2018primal}
Thomas Kesselheim, Klaus Radke, Andreas Tonnis, and Berthold Vocking.
\newblock Primal beats dual on online packing lps in the random-order model.
\newblock \emph{SIAM Journal on Computing}, 47\penalty0 (5):\penalty0 1939--1964, 2018.

\bibitem[Kierstead(1991)]{kierstead1991polynomial}
Hal~A Kierstead.
\newblock A polynomial time approximation algorithm for dynamic storage allocation.
\newblock \emph{Discrete Mathematics}, 88\penalty0 (2-3):\penalty0 231--237, 1991.

\bibitem[Leung et~al.(1990)Leung, Tam, Wong, Young, and Chin]{leung1990packing}
Joseph~YT Leung, Tommy~W Tam, Chin~S Wong, Gilbert~H Young, and Francis~YL Chin.
\newblock Packing squares into a square.
\newblock \emph{Journal of Parallel and Distributed Computing}, 10\penalty0 (3):\penalty0 271--275, 1990.

\bibitem[Manchiraju et~al.(2024)Manchiraju, Dawande, Janakiraman, and Raghunathan]{manchiraju2022dynamic}
Chandrasekhar Manchiraju, Milind Dawande, Ganesh Janakiraman, and Arvind Raghunathan.
\newblock Dynamic pricing and capacity optimization in railways.
\newblock \emph{Manufacturing \& Service Operations Management}, 26\penalty0 (1):\penalty0 350--369, 2024.

\bibitem[Marchetti-Spaccamela and Vercellis(1995)]{marchetti1995stochastic}
Alberto Marchetti-Spaccamela and Carlo Vercellis.
\newblock Stochastic on-line knapsack problems.
\newblock \emph{Mathematical Programming}, 68\penalty0 (1):\penalty0 73--104, 1995.

\bibitem[Mehrani et~al.(2022)Mehrani, Cardonha, and Bergman]{mehrani2022models}
Saharnaz Mehrani, Carlos Cardonha, and David Bergman.
\newblock Models and algorithms for the bin-packing problem with minimum color fragmentation.
\newblock \emph{INFORMS Journal on Computing}, 34\penalty0 (2):\penalty0 1070--1085, 2022.

\bibitem[{Met Office}(2010 - 2015)]{Cartopy}
{Met Office}.
\newblock \emph{Cartopy: a cartographic python library with a Matplotlib interface}.
\newblock Exeter, Devon, 2010 - 2015.
\newblock URL \url{https://scitools.org.uk/cartopy}.

\bibitem[Miyazaki and Okamoto(2010)]{miyazaki2010improving}
Shuichi Miyazaki and Kazuya Okamoto.
\newblock Improving the competitive ratios of the seat reservation problem.
\newblock In \emph{IFIP International Conference on Theoretical Computer Science}, pages 328--339. Springer, 2010.

\bibitem[Naori and Raz(2019)]{naori2019online}
David Naori and Danny Raz.
\newblock {Online Multidimensional Packing Problems in the Random-Order Model}.
\newblock 149:\penalty0 10:1--10:15, 2019.
\newblock ISSN 1868-8969.
\newblock \doi{10.4230/LIPIcs.ISAAC.2019.10}.

\bibitem[Olivier et~al.(2021)Olivier, Lodi, and Pesant]{olivier2021quadratic}
Philippe Olivier, Andrea Lodi, and Gilles Pesant.
\newblock The quadratic multiknapsack problem with conflicts and balance constraints.
\newblock \emph{INFORMS Journal on Computing}, 33\penalty0 (3):\penalty0 949--962, 2021.

\bibitem[Talluri and Van~Ryzin(2006)]{talluri2006theory}
Kalyan~T Talluri and Garrett~J Van~Ryzin.
\newblock \emph{The theory and practice of revenue management}, volume~68.
\newblock Springer Science \& Business Media, 2006.

\bibitem[Vera et~al.(2021)Vera, Banerjee, and Gurvich]{vera2021online}
Alberto Vera, Siddhartha Banerjee, and Itai Gurvich.
\newblock Online allocation and pricing: Constant regret via bellman inequalities.
\newblock \emph{Operations Research}, 69\penalty0 (3):\penalty0 821--840, 2021.

\bibitem[Wang and Wang(2022)]{wang2022constant}
Yining Wang and He~Wang.
\newblock Constant regret resolving heuristics for price-based revenue management.
\newblock \emph{Operations Research}, 70\penalty0 (6):\penalty0 3538--3557, 2022.

\bibitem[Xie et~al.(2013)Xie, Zhu, and Zheng]{xie2013dynamic}
Zhixue Xie, Wanshan Zhu, and Li~Zheng.
\newblock A dynamic railway seat allocation problem.
\newblock In \emph{IIE Annual Conference. Proceedings}, page 3480. Institute of Industrial and Systems Engineers (IISE), 2013.

\end{thebibliography}





\appendix

\section{Proof of Corollary~\ref{cor: fluid optimality group 1}}

We generalize the $\arrivalrate$-Policy by making resources for requests of type~$\requesttype$ available   in time step~$\timetogo$ with probability~$\theta \sum\limits_{\coach \in \coaches} \frac{x_{\requesttype,\timetogo,\coach}}{\arrivalrate_{\requesttype}}$, where~$0 < \theta \leq 1$ is a parameter of the policy. The expected revenue~$E[\profit]$ is written as
\begin{eqnarray*}
E[\profit] = 
        \sum_{\timetogo \in \timehorizon}
        \sum_{\requesttype \in \requesttypes}
        \price(\requesttype)
        \underbrace{Pr[G_{\requesttype,\timetogo} = 1]}_{\text{Sufficient residual capacity to assign~$\requesttype$}}
        \underbrace{
        Pr[X \geq \timetogo]
        }_{\text{There are at least $\timetogo$ requests.}}
        \underbrace{
        \sum_{\coach \in \coaches}
             \theta x_{\requesttype,\timetogo,\coach}.
            }_{\text{Policy tries to assign a request of type~$\requesttype$.}}      
\end{eqnarray*}
Using the same Bernoulli variables, we obtain
\begin{eqnarray*}
Var\left[N_{\leg,\timetogo}\right]
\leq
    E\left[N_{\leg,\timetogo}\right]
=
    Pr[N_{\leg,\timetogo}] 
&=& 
    \sum_{\requesttype \in \requesttypes: \leg \in \setLegs(\requesttype)}
    \sum_{\coach \in \coaches}
        \theta \frac{x_{\requesttype,\timetogo,\coach}}{\arrivalrate_{\requesttype}}
        \arrivalrate_{\requesttype}    
       =
    \sum_{\requesttype \in \requesttypes: \leg \in \setLegs(\requesttype)}
    \sum_{\coach \in \coaches}
    \theta x_{\requesttype,\timetogo,\coach},
\end{eqnarray*}
and the stochastic features of~$\sum\limits_{\timetogo \in \timehorizon} N_{\leg,\timetogo}$ are such that
\[
\sum_{\timetogo \in \timehorizon}Var\left[N_{\leg,\timetogo}\right]
\leq
    \sum_{\timetogo \in \timehorizon}
        E\left[N_{\leg,\timetogo}\right]
=
    \sum_{\timetogo \in \timehorizon}
    \sum_{\requesttype \in \requesttypes: \leg \in \setLegs(\requesttype)}
    \sum_{\coach \in \coaches}
     \theta x_{\requesttype,\timetogo,\coach}
\leq
    \sum_{\timetogo \in \timehorizon}
    \sum_{\requesttype \in \requesttypes: \leg \in \setLegs(\requesttype)}
    \sum_{\coach \in \coaches}
        q_{\requesttype,\leg} \theta x_{\requesttype,\timetogo,\coach}
\leq
    \theta |\coaches|\reservedcapacity.
\]
The probability which  the number of assigned requests surpasses~$\frac{|\coaches|}{\delta}$ is bounded by:
\begin{eqnarray*}
Pr\left[
    \sum_{\timetogo \in \timehorizon}
        N_{\leg,\timetogo} 
\geq
    \frac{|\coaches|}{\delta}
\right]
&\leq&
Pr\left[
    \sum_{\timetogo \in \timehorizon}
        (N_{\leg,\timetogo} - E[N_{\leg,\timetogo}] ) 
\geq 
    |\coaches|\left(
    \frac{1}{\delta} 
    -\theta\reservedcapacity\right)
\right]
\\
&\leq&
\exp{\left(  
    -\frac{  
        \frac{|\coaches|^2}{2} 
            \left(
                \frac{1}{\delta} 
    -\theta\reservedcapacity
            \right)^2 
        }    
        {
            \sum\limits_{\timetogo \in \timehorizon}
                Var(N_{\leg,\timetogo}) 
            + 
            \frac{|\coaches|}{3} 
                \left(
                   \frac{1}{\delta} 
    -\theta\reservedcapacity
                \right) 
            }  
\right)}
\\
&\leq&
\exp{\left(  
    -\frac{  
        \frac{|\coaches|^2}{2} 
            \left(
                \frac{1}{\delta} 
    -\theta\reservedcapacity
            \right)^2 
        }    
        {
            \theta|\coaches|\reservedcapacity
            + 
            \frac{|\coaches|}{3} 
                \left(
                   \frac{1}{\delta} 
    -\theta\reservedcapacity
                \right) 
            }  
\right)}
=
\exp{\left(  
    -\frac{  
        \frac{|\coaches|}{2} 
            \left(
                \frac{1}{\delta} 
    -\theta\reservedcapacity
            \right)^2 
        }    
        {
            \theta\reservedcapacity
            + 
            \frac{1}{3} 
                \left(
                   \frac{1}{\delta} 
    -\theta\reservedcapacity
                \right) 
            }  
\right).}
\end{eqnarray*}
With that, we obtain the following guarantee for~$E[\profit]$:
\begin{eqnarray*}
    E[\profit]   
&=& 
    \theta
    Pr[G_{\requesttype,\timetogo} = 1]
    \sum_{\timetogo \in \timehorizon}
    \sum_{\requesttype \in \requesttypes}
    \sum_{\coach \in \coaches}
        Pr[X \geq t]
        \price(\requesttype)     
        x_{\requesttype,\timetogo,\coach}
\geq
    \theta \left[
    1 - 
|\setLegs|
\exp{\left(  
    -\frac{  
        \frac{|\coaches|}{2} 
            \left(
                \frac{1}{\delta} 
    -\theta\reservedcapacity
            \right)^2 
        }    
        {
            \theta\reservedcapacity
            + 
            \frac{1}{3} 
                \left(
                   \frac{1}{\delta} 
    -\theta\reservedcapacity
                \right) 
            }  
\right)}
\right]\overline{\profit},
\end{eqnarray*}

By replacing the real-world parameters~$|\coaches| = 20$, $\reservedcapacity = 100$, and~$\delta = 0.06$, we obtain
\begin{eqnarray*}
    J   
&\geq&
    \theta 
    \left(
    1 
    - 
    4 \exp{\left(  
    -\frac{  
        10 
            \left(
                16.667 - 100\theta 
            \right)^2 
        }    
        {
            66.667\theta
            +
            5.555
        }  
\right)} \right)J^d
\geq
\theta \left[
    1 - 
5\exp{\left(  
    -\frac{  
        \frac{1}{2} 
            \left(
                \frac{20}{0.06} 
    -1
    -6\theta
            \right)^2 
        }    
        {
           6\theta
            + 
            \frac{1}{3} 
                \left(
                   \frac{20}{0.06} 
    -1
    -6\theta
                \right) 
            }  
\right)}
\right],
\end{eqnarray*}
which is monotonically increasing for~$\theta > 0.2$ and, in particular, arbitrarily close to 1 for~$\theta = 1$, so the extended policy converges is identical to the original one for~$\delta = 0.06$. More generally, $\theta = 1$ is optimal for every~$\delta \geq \frac{2}{\reservedcapacity}$, i.e., the policy derived from~\ref{model:fluid}. For~$\delta = \frac{1}{\reservedcapacity}$, the maximum is attained by~$\theta \approx 0.9218$, which yields an approximation guarantee of~$0.916$, whereas~$\theta = 1$ gives a trivial (negative) lower bound.

\section{Formulation for Deterministic \Problem}\label{sec:Deterministic Formulation}

Formulation~\ref{model:deterministic} presented below solves~\Problem.
\begin{align}\label{model:deterministic}
\tag{\textbf{Offline}}
    \max\  &\sum\limits_{\request \in \requests}
    \price(\requesttype) \cdot  y_{\request} &  \\
&
    \sum\limits_{\coach \in \coaches}  x_{\request,\coach} \leq y_{\request}, 
    \forall \request   \in \mathcal{R}  \nonumber \\
&
    \sum\limits_{\substack{(\request,\coach) \in \assignments \\ 
    \leg \in \setLegs(\requesttype)}} \numPass(\requesttype) \cdot x_{\request,\coach} \leq \reservedcapacity,
    \forall (\leg,\coach) \in  \setLegs \times \coaches  \nonumber \\    
&
    y_{\request} + \sum_{\leg \in \setLegs(\request)} z_{\request,\leg,\coach} = 1,  \forall (\request,\coach) \in \requests \times \coaches \nonumber
    \\
&
        \sum_{\request' \in \requests_{\arrivalorder(\request)}: 
        \leg \in \setLegs(\request'), 
        } 
            \numPass(\request') x_{\request',\coach} 
    \geq  
        (\reservedcapacity -   \numPass(\request)+1)z_{\request,\leg,\coach},
        \forall \request,\leg,\coach \in \requests \times \setLegs \times \coaches \nonumber
\\
& 
    y_{\request}  \in \{0,1\}, 
    z_{\request,\leg,\coach} \in \{0,1\}, 
    x_{\request,\coach} \in \{0,1\}, 
    \forall (\request,\coach) \in \assignments, \leg \in \setLegs \nonumber
\end{align}
\ref{model:deterministic} uses binary variable~$x_{\requesttype,j,\coach}$ to represent the assignment of request~$\request = (\requesttype,j)$ to coach~$\coach$,  binary variables~$y_{\request}$ to indicate that request~$\request$ has been assigned,
and binary variables~$z_{\request,\leg,\coach}$ to indicate that the residual capacity of leg~$\leg$ on coach~$\coach$ is strictly smaller than~$\numPass(\request)$ upon the arrival of~$\request$. The objective function gives the sum of the revenues collected from all assignments. The first set of constraints asserts that each request is assigned to at most one coach and links the variables~$\boldsymbol{x}$ and~$\boldsymbol{y}$. The second set of (knapsack) constraints controls the capacity utilization of each coach per leg. 

The last two families of constraints work in tandem to enforce fairness by precluding unfair rejections. The first forces the activation of exactly one variable~$z_{\request,\leg,\coach}$ for each coach~$\coach$ if~$\request$ is rejected; otherwise, all these variables are equal to zero. The second family sets a lower bound of~$\reservedcapacity -   \numPass(\request)+1$ on the utilization of~$\leg$ in coach~$\coach$ whenever~$z_{\request,\leg,\coach}$ is activated.

%
%

\end{document}